\newtheorem{theorem}{Theorem}[section]
\theoremstyle{definition}
\newtheorem{prop}[theorem]{Proposition}
\theoremstyle{remark}
\renewcommand{\dim}{\mbox{\rm dim}}
\numberwithin{equation}{subsection}
\numberwithin{theorem}{subsection}
\begin{document}\large

\title{On the elliptic curves
$ y^{2} = x ( x \pm p ) ( x \pm q ) $ over imaginary quadratic
number fields of class number one}
\author{Xiumei Li}
\address{Department of Mathematical Science, Tsinghua University, Beijing, P. R. China 100084}
\email{xm-li09@mails.tsinghua.edu.cn}


\subjclass[2000]{Primary 14H52; Secondary 11G05}



\keywords{elliptic curve, Selmer group,
  Mordell-Weil group.}

\begin{abstract} Let $ p $ and  $ q $ be odd prime numbers with
$ q - p = 2, $ the $\varphi -$Selmer groups, Shafarevich-Tate groups
( $ \varphi - $ and $ 2-$part ) and their dual ones as well the
Mordell-Weil groups of elliptic curves $ y^{2} = x ( x \pm p ) ( x
\pm q ) $ over imaginary quadratic number fields of class number one
are determined explicitly in many cases.\end{abstract}

\maketitle

\section{Introduction and Main Results} Let $ p $ and  $ q $ be odd prime numbers with $ q - p = 2. $ We
consider the elliptic curves  \begin{equation} \label{equation:basic1}E = E_{\varepsilon }: y^{2} = x ( x
+ \varepsilon p ) ( x + \varepsilon q ) \quad ( \varepsilon = \pm 1
) \end{equation} Write $ E = E_{+} $ if $
\varepsilon = 1 $ and $ E = E_{-} $ if $ \varepsilon = -1. $
Let the elliptic curves \begin{equation}\label{equation:basic2} E^{\prime } = E_{\varepsilon }^{\prime }:
y^{2} = x^{3} - 2 \varepsilon ( p + q ) x^{2} + 4 x.\end{equation} be the isogenous
curves, where the two-isogeny is $$ \varphi : E \longrightarrow E^{\prime}, \quad ( x, y ) \longmapsto
( y^{2} / x^{2}, \ y( pq - x^{2} ) / x^{2}) $$ with ker$(\varphi)= E [
\varphi ] = \{ O, \ (0, 0) \}$ and  $$ \widehat{ \varphi } : E^{\prime } \longrightarrow E, \quad (
x, y ) \longmapsto ( y^{2} /4 x^{2}, \ y( 4 - x^{2} ) /8 x^{2}) $$ is the dual isogeny of $ \varphi $
with kernel $ E^{\prime} [ \widehat{ \varphi } ] = \{ O, \ (0, 0) \}.
$ \\
\indent D.Qiu \cite{Qiu1} gave out many results about the selmer group,Shafarevich-Tate groups and Mordell-Weil groups of $E$ over $\mathbb{Q}.$ In this paper, we mainly generalized theorem 1[5] and
theorem 2 \cite{Qiu1} to imaginary quadratic fields $K$ with class number one.
Gauss-Baker-Stark theorem \cite{Zhang} tell us that there are exactly nine such fields $ K = \mathbb{Q}
(\sqrt{D} ) $ with fundamental discriminant $ D $ given by \\
\indent \indent $  D = -3, -4, -7, -8, -11, -19, -43, -67, -163. $ \\
\indent Our main purpose in this paper is to determine the Selmer groups $
S^{(\varphi )} ( E / K ), \ S^{(\widehat{\varphi })} ( E^{\prime } /
K ), $ \ Shafarevich-Tate groups $ \text{TS}( E / K )[2], \
\text{TS}( E / K )[\varphi ], \ \text{TS}( E^{\prime } / K
)[\widehat{\varphi }], $ \ Mordell- Weil group $ E ( K ) $ and $
\text{rank} ( E ( K ) ). $ There are many
literature studying special types of elliptic curves by using $
2-$descent method ( see e.g., \cite{Bremner1}, \cite{Bremner2}, \cite{Dabrowski}, \cite{Strocker}).
\par \vskip 0.1 cm

\begin{theorem}\label{theorem:main1}  Let $ E = E_{\varepsilon } $ and $
E^{\prime } = E_{\varepsilon }^{\prime } $ be the
elliptic curves in (1) and (2) with $ \varepsilon = \pm 1. $ \begin{itemize}
\item[(A)] Assume that condition (A) holds, then\\
 $$ S^{(\varphi )} (E_{+} / K ) \cong \  \left \{
   \begin{array}{l}
  0 \qquad \quad \text{if} \ p \equiv 3, 17 (\bmod 56 ), \\
  ( \mathbb{Z} / 2 \mathbb{Z} ) \quad \text{if} \ p \equiv 45 (\bmod 56
   ), \\
   \left( \mathbb{Z} / 2 \mathbb{Z} \right)^{2} \quad \text{if} \ p \equiv 31 (\bmod 56
   ).
  \end{array}
  \right.    $$
  $$ S^{(\widehat{\varphi } )} (E^{\prime }_{+} / K ) \cong \  \left \{
   \begin{array}{l}
  \left( \mathbb{Z} / 2 \mathbb{Z} \right)^{2} \quad \text{if} \ p \equiv 45 (\bmod 56
   ), \\
   \left( \mathbb{Z} / 2 \mathbb{Z} \right)^{3} \quad \text{if} \ p \equiv 3, 17, 31 (\bmod 56
   ).
  \end{array}
  \right.    $$
\ $$ S^{(\varphi )} (E_{-} / K ) \cong \  \left \{
   \begin{array}{l}
  0 \qquad \quad \text{if} \ p \equiv 3, 45 (\bmod 56 ), \\
  ( \mathbb{Z} / 2 \mathbb{Z} ) \quad \text{if} \ p \equiv 17 (\bmod 56
   ), \\
   \left( \mathbb{Z} / 2 \mathbb{Z} \right)^{2} \quad \text{if} \ p \equiv 31 (\bmod 56
   ).
  \end{array}
  \right.   $$
  $$ S^{(\widehat{\varphi } )} (E^{\prime }_{-} / K ) \cong \  \left \{
   \begin{array}{l}
  \left( \mathbb{Z} / 2 \mathbb{Z} \right)^{2} \quad \text{if} \ p \equiv 17 (\bmod 56
   ), \\
   \left( \mathbb{Z} / 2 \mathbb{Z} \right)^{3} \quad \text{if} \ p \equiv 3, 31, 45
   (\bmod 56 ).
  \end{array}
  \right. $$
\item[(B)] Assume that condition (B) holds, then
$$ S^{(\varphi )} (E / K ) \cong \ \mathbb{Z} / 2 \mathbb{Z}, \qquad
S^{(\widehat{\varphi } )} (E^{\prime } / K ) \cong \ \left( \mathbb{Z} / 2
\mathbb{Z} \right)^{3}. $$
\item[(C)] Assume that condition (C) holds, then
$$ S^{(\varphi )} (E_{+} / K ) \cong \ \{ 0 \}, \qquad
S^{(\widehat{\varphi } )} (E^{\prime }_{+} / K ) \cong \ \left( \mathbb{Z} / 2
\mathbb{Z} \right)^{3}. $$
$$ S^{(\varphi )} (E_{-} / K ) \cong \ \mathbb{Z} / 2 \mathbb{Z}, \qquad
S^{(\widehat{\varphi } )} (E^{\prime }_{-} / K ) \cong \ \left( \mathbb{Z} / 2
\mathbb{Z} \right)^{3}. $$
\item[(D)]  Assume that condition (D) holds, then
 \ $$ S^{(\varphi )} (E / K ) \cong \  \left \{
   \begin{array}{l}
  0 \qquad \quad \text{if} \ p \equiv 3, 5 (\bmod 8 ), \\
  ( \mathbb{Z} / 2 \mathbb{Z} ) \quad \text{if} \ p \equiv 1, 7 (\bmod 8
   ). \\
   \end{array}
  \right.    $$
  $$ S^{(\widehat{\varphi } )} (E^{\prime } / K ) \cong \  \left \{
   \begin{array}{l}
  \left( \mathbb{Z} / 2 \mathbb{Z} \right)^{2} \quad \text{if} \ p \equiv 5 (\bmod
  8 ), \\
   \left( \mathbb{Z} / 2 \mathbb{Z} \right)^{3} \quad \text{if} \ p \equiv 1, 3 (\bmod
   8 ), \\
   \left( \mathbb{Z} / 2 \mathbb{Z} \right)^{4} \quad \text{if} \ p \equiv 7 (\bmod
   8 ).
  \end{array}
  \right.    $$
\item[(E)] Assume that condition (E) holds, then \\
 \ $$ S^{(\varphi )} (E / K ) \cong \  \left \{
   \begin{array}{l}
  0 \qquad \quad \text{if} \ p \equiv 5, 11 (\bmod 24 ), \\
  ( \mathbb{Z} / 2 \mathbb{Z} ) \quad \text{if} \ p \equiv 17, 23 (\bmod 24
   ). \\
   \end{array}
  \right.    $$
  $$ S^{(\widehat{\varphi } )} (E^{\prime } / K ) \cong \  \left \{
   \begin{array}{l}
  \left( \mathbb{Z} / 2 \mathbb{Z} \right)^{3} \quad \text{if} \ p \equiv 5, 11 (\bmod
  24 ), \\
   \left( \mathbb{Z} / 2 \mathbb{Z} \right)^{4} \quad \text{if} \ p \equiv 17, 23 (\bmod
   24 ).
  \end{array}
  \right.    $$  \end{itemize}\end{theorem}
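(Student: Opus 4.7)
The plan is to carry out the standard descent via the $2$-isogeny $\varphi: E\to E'$ and its dual $\widehat{\varphi}: E'\to E$. The $\varphi$-descent provides an injection
\[
E'(K)/\varphi(E(K)) \hookrightarrow K^*/K^{*2}, \qquad (x,y) \mapsto x\bmod K^{*2},
\]
and $S^{(\varphi)}(E/K)$ sits inside the finite group
\[
K(S,2) = \{\, d\in K^*/K^{*2} \mid v_\mathfrak{q}(d)\equiv 0\ (\bmod 2) \text{ for all } \mathfrak{q}\notin S\,\}
\]
as the subgroup of classes whose associated homogeneous space is $K_\mathfrak{q}$-solvable at every $\mathfrak{q}$, where $S$ consists of the archimedean place of $K$ together with the primes above $2$, $p$, and $q$. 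The description of $S^{(\widehat{\varphi})}(E'/K)$ is entirely parallel.

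Because $K$ has class number one, $K(S,2)$ is generated by $\mathcal{O}_K^*/\mathcal{O}_K^{*2}$ together with one uniformizer per prime of $S$ lying over $2$, $p$ or $q$. In each of the regimes (A)--(E) the splitting types of $2$, $p$, and $q$ in $K$ are determined by the hypothesis, so the $\mathbb{F}_2$-rank of $K(S,2)$ is fixed and an explicit small generating set is at hand. For each candidate class $d$ I then check whether its homogeneous space --- a genus-one quartic $C_d: dw^2 = f(z)$ with $f\in\mathcal{O}_K[z]$ depending explicitly on $p$, $q$, $\varepsilon$ --- has a $K_\mathfrak{q}$-point at every $\mathfrak{q}\in S$; at places outside $S$ solvability is automatic.

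For $\mathfrak{q}$ lying above $p$ or $q$, this local test reduces in the standard way to quadratic residue symbols such as $\bigl(\tfrac{-1}{p}\bigr)$, $\bigl(\tfrac{2}{p}\bigr)$, $\bigl(\tfrac{p}{q}\bigr)$, $\bigl(\tfrac{-7}{p}\bigr)$ and $\bigl(\tfrac{-3}{p}\bigr)$, and combining these via quadratic reciprocity is what produces the moduli $8$, $24$ and $56$ in the theorem --- for example $56 = 8\cdot 7$ is the natural modulus when $K = \mathbb{Q}(\sqrt{-7})$, as in condition (A). The main obstacle is the analysis at the primes $\mathfrak{q}\mid 2$, where $K_\mathfrak{q}^*/K_\mathfrak{q}^{*2}$ is larger and a direct application of Hensel's lemma fails: one must decide local squareness by expanding $p$, $q$ and $p+q$ modulo a sufficiently high power of the prime, and the answer depends on whether $2$ is split, inert, or ramified in $K$. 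This step is precisely what generalizes the corresponding lemmas of Qiu \cite{Qiu1} from $\mathbb{Q}$ to $K$. Once the local conditions at each $\mathfrak{q}\in S$ are tabulated for each candidate $d$, reading off the $\mathbb{F}_2$-dimension of the subgroup of $K(S,2)$ satisfying all of them yields the Selmer groups stated in cases (A)--(E).
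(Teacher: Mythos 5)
Your proposal correctly identifies the method the paper uses: descent via the two-isogeny $\varphi$ and its dual, identification of $S^{(\varphi)}(E/K)$ and $S^{(\widehat{\varphi})}(E'/K)$ with the subgroups of $K(S,2)$ whose homogeneous spaces are everywhere locally solvable, and a case split according to the splitting behaviour of $2$, $p$, $q$ in each of the nine fields $K$. That is exactly the architecture of Section 2 of the paper. But as it stands your text is a plan, not a proof: the entire content of the theorem --- the specific congruence classes mod $56$, $24$, $8$ that decide membership of each generator $d$ in the Selmer group --- is produced by the local computations that you defer with phrases like ``this local test reduces in the standard way to quadratic residue symbols'' and ``one must decide local squareness by expanding $p$, $q$ and $p+q$ modulo a sufficiently high power of the prime.'' None of those computations are actually carried out, so none of the displayed isomorphisms in the statement are established.

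To see concretely what is missing, take case (A), $K=\mathbb{Q}(\sqrt{-7})$, $d=\pi_2=-\tfrac{1+\sqrt{-7}}{2}$. The paper's proof that $\pi_2\in S^{(\varphi)}(E_+/K)\Leftrightarrow p\equiv 31\ (\bmod\ 56)$ requires: (i) a direct Hensel argument at $\overline{\pi_2}$; (ii) at $\pi_2$, a valuation analysis forcing $v_{\pi_2}(z)=0$, the substitution $z^2-1=\pi_2^3 z_0$, $w=\pi_2 w_0$, and the extraction of the condition $p\equiv 7\ (\bmod\ 8)$ from $v_{\pi_2}(\overline{\pi_2}^2(p+1))\geq 3$; and (iii) at the inert primes $p$ and $q$, a non-obvious construction of an approximate point $\alpha=\tfrac{b}{2}\bigl(1+\tfrac{\sqrt{-7}}{1+2a}\bigr)$ using the fact that $\bigl(\tfrac{-7}{p}\bigr)=-1$ forces exactly one of two auxiliary symbols to equal $+1$. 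Step (iii) in particular is not a ``standard reduction to quadratic residue symbols'': local solvability at the primes above $p$ and $q$ is only \emph{sometimes} decided by a single symbol, and in several subcases (e.g.\ Propositions on $C_{-2}$, $C_{\pi_2}$, and the $C'_{\mu}$ spaces in cases D and E) it requires constructing explicit elements of $\mathcal{O}_{K_v}$ and verifying a Hensel inequality. Similar remarks apply to the elimination steps (showing e.g.\ $-1\notin S^{(\varphi)}(E/K)$ in cases B and C), which rest on exhaustive valuation tables rather than on residue symbols at all. Until these local analyses are performed for every generator of $K(S,2)$ in every case (A)--(E), the dimensions asserted in the theorem are not proved.
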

\indent For simplicity, we denote the dimension $ \dim_{2} V
= \dim_{\mathbb{F}_{2}} V $  for a vector space $ V $ over the field
$ \mathbb{F}_{2} $ of two elements.
\begin{theorem}\label{theorem:main2} Let $ E = E_{\varepsilon } $ and $
E^{\prime } = E_{\varepsilon }^{\prime } $ be the
elliptic curves in (1) and (2) with $ \varepsilon = \pm 1. $ \begin{itemize}
\item[(A$_{+}$)] Assume that condition (A) holds. For $ \varepsilon = 1, $ we have \\
(1)
$ \text{rank} ( E ( K ) ) + \text{dim}_{2} ( \text{TS}( E / K
)[\varphi ]) + \text{dim}_{2} ( \text{TS}( E^{\prime } / K
)[\widehat{\varphi } ]) = 3,$ if $ p \equiv 31 (\bmod 56 ); $ \\
(2) $ \text{rank} ( E ( K ) ) + \text{dim}_{2} ( \text{TS}( E^{\prime
} / K )[2 ]) = 1, $ if $ p \equiv 3, 17 (\bmod 56 );$ \\
(3) $ \text{rank} ( E ( K )) + \text{dim}_{2} ( \text{TS}( E / K )[2 ])
= 1,$ if $ p \equiv 45 (\bmod 56 ). $
 \item[(A$_{-}$)] Assume that condition (A) holds. For $ \varepsilon = - 1, $ we have \\
(1)
$ \text{rank} ( E ( K ) ) + \text{dim}_{2} ( \text{TS}( E^{\prime
} / K )[2]) = 1, $ if $ p \equiv 3, 45 (\bmod 56 ); $  \\
(2) $ \text{rank} ( E ( K ) ) + \text{dim}_{2} ( \text{TS}( E / K )[2
]) = 1, $ if $ p \equiv 17 (\bmod 56 ); $ \\
(3)
$ \text{rank} ( E ( K ) ) + \text{dim}_{2} ( \text{TS}( E / K
)[\varphi ]) + \text{dim}_{2} ( \text{TS}( E^{\prime } / K
)[\widehat{\varphi }]) = 3, $ if $ p \equiv 31 (\bmod 56 ). $
\item[(B)] Assume that condition (B) holds, then
$$ \text{rank} ( E ( K ) ) + \text{dim}_{2} ( \text{TS}( E / K
)[\varphi ]) + \text{dim}_{2} ( \text{TS}( E^{\prime } / K
)[\widehat{\varphi } ]) = 2. $$
\item[(C)] Assume that condition (B) holds, then
$$  \quad \text{rank} ( E_{+} ( K ) ) +
\text{dim}_{2} ( \text{TS}( E^{\prime }_{+} / K )[2]) = 1. $$
$$ \text{rank} ( E_{-} ( K ) ) + \text{dim}_{2} ( \text{TS}( E_{-} / K
)[\varphi ]) + \text{dim}_{2} ( \text{TS}( E^{\prime }_{-} / K
)[\widehat{\varphi } ]) = 2. $$
\item[(D)]  Assume that condition (D) holds, then
 (1) $ \text{rank} ( E ( K ) ) + \text{dim}_{2} ( \text{TS}( E /
K )[\varphi ]) + \text{dim}_{2} ( \text{TS}( E^{\prime } / K )
[\widehat{\varphi } ]) = 2, $ if $ p \equiv 1 (\bmod 8 ); $ \\
(2) $ \text{rank} ( E ( K ) ) + \text{dim}_{2} ( \text{TS}( E^{\prime
} / K )[2 ]) = 1, $ if $ p \equiv 3 (\bmod 8 ); $ \\
(3) $ \text{rank} ( E ( K ) ) + \text{dim}_{2} ( \text{TS}( E /
K )[\varphi ]) + \text{dim}_{2} ( \text{TS}( E^{\prime } / K )
[\widehat{\varphi } ]) = 3, $ if $ p \equiv 7 (\bmod 8 ); $ \\
(4)  $$ \text{TS}( E / K )[2]
= 0, \quad \text{TS}( E^{\prime } / K )[2 ] = 0,   E ( K ) \cong \mathbb{Z} / 2 \mathbb{Z}
\times \mathbb{Z} / 2 \mathbb{Z}, $$ if $ p \equiv 5 (\bmod 8 ). $
\item[(E)]  Assume that condition (E) holds, then \\
(1)  $ \text{rank} ( E ( K ) ) + \text{dim}_{2} ( \text{TS}( E^{\prime
} / K )[2]) = 1, $ \ if $ p \equiv 5, 11 (\bmod 24 ); $ \\
(2)  $ \text{rank} ( E ( K ) ) + \text{dim}_{2} ( \text{TS}( E / K
)[\varphi ]) + \text{dim}_{2} ( \text{TS}( E^{\prime
} / K )[\widehat{\varphi }]) = 3, $ if $ p \equiv 17, 23 (\bmod 24 ). $ \end{itemize} \end{theorem}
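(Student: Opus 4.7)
The plan is to combine the Selmer dimensions computed in Theorem \ref{theorem:main1} with the standard rank formula obtained from $2$-descent along the isogenies $\varphi$ and $\widehat{\varphi}$. I would first record the two Selmer-to-Sha short exact sequences
$$ 0 \longrightarrow E^{\prime}(K)/\varphi E(K) \longrightarrow S^{(\varphi)}(E/K) \longrightarrow \text{TS}(E/K)[\varphi] \longrightarrow 0 $$
and its $\widehat{\varphi}$-analogue, together with the snake-lemma sequence coming from $\widehat{\varphi} \circ \varphi = [2]_{E}$. Using that $E(K)[2] = (\mathbb{Z}/2\mathbb{Z})^{2}$ (the three points $(0,0),(-\varepsilon p,0),(-\varepsilon q,0)$ are $K$-rational) and that $\varphi$ sends $E(K)[2]$ onto $E^{\prime}(K)[\widehat{\varphi}] = \{O,(0,0)\}$, the order-counting telescopes to
$$ \text{rank}(E(K)) = \dim_{2} S^{(\varphi)}(E/K) + \dim_{2} S^{(\widehat{\varphi})}(E^{\prime}/K) - \dim_{2} \text{TS}(E/K)[\varphi] - \dim_{2} \text{TS}(E^{\prime}/K)[\widehat{\varphi}] - 2. $$
Substituting the Selmer dimensions from Theorem \ref{theorem:main1} case by case produces, in each congruence class, an identity of the form $\text{rank}(E(K)) + \dim_{2} \text{TS}(E/K)[\varphi] + \dim_{2} \text{TS}(E^{\prime}/K)[\widehat{\varphi}] = n$, which is precisely the content of (A$_{+}$)(1), (A$_{-}$)(3), (B), the second line of (C), (D)(1), (D)(3), and (E)(2).

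Step two converts the $\varphi$/$\widehat{\varphi}$-information into full $2$-part information in the remaining cases. The tool is the four-term exact sequence
$$ 0 \longrightarrow E^{\prime}(K)[\widehat{\varphi}]/\varphi(E(K)[2]) \longrightarrow \text{TS}(E/K)[\varphi] \longrightarrow \text{TS}(E/K)[2] \stackrel{\varphi_{*}}{\longrightarrow} \text{TS}(E^{\prime}/K)[\widehat{\varphi}] $$
and its dual on the $E^{\prime}$-side, both derived from the short exact sequence of Galois modules $0 \to E[\varphi] \to E[2] \to E^{\prime}[\widehat{\varphi}] \to 0$. In each of (A$_{+}$)(2)--(3), (A$_{-}$)(1)--(2), the first line of (C), (D)(2), and (E)(1), exactly one of $S^{(\varphi)}(E/K)$ or $S^{(\widehat{\varphi})}(E^{\prime}/K)$ is trivial, so the associated $\text{TS}[\varphi]$ or $\text{TS}[\widehat{\varphi}]$ vanishes and the sequence collapses, up to the correction term $E^{\prime}(K)[\widehat{\varphi}]/\varphi(E(K)[2])$ or its sibling $E(K)[\varphi]/\widehat{\varphi}(E^{\prime}(K)[2])$, to an isomorphism between the surviving Sha-isogeny piece and $\text{TS}[2]$ on the dual curve. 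The non-trivial $K$-rational preimages of $(0,0)_{E}$ under $\widehat{\varphi}$ are the $2$-torsion points of $E^{\prime}$ of $x$-coordinate $\varepsilon(p+q) \pm 2\sqrt{pq}$, so whether the correction vanishes hinges on whether $\sqrt{pq}$ lies in $K$, a condition that each of (A)--(E) controls via the splitting behaviour of $p$ and $q$ in $\mathcal{O}_{K}$.

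Case (D)(4), $p \equiv 5 \pmod 8$, is the one where the rank is pinned down completely: Theorem \ref{theorem:main1} gives $S^{(\varphi)}(E/K) = 0$ and $\dim_{2} S^{(\widehat{\varphi})}(E^{\prime}/K) = 2$, so the rank formula forces $\text{rank}(E(K)) = 0$ and $\text{TS}(E/K)[\varphi] = \text{TS}(E^{\prime}/K)[\widehat{\varphi}] = 0$, and the sequences of step two --- together with the vanishing of the corresponding correction under condition (D) with $p \equiv 5 \pmod 8$ --- propagate the vanishing to $\text{TS}(E/K)[2] = \text{TS}(E^{\prime}/K)[2] = 0$. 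Because $(\mathbb{Z}/2\mathbb{Z})^{2} \subseteq E(K)_{\text{tors}}$ already and $E(K)_{\text{tors}}$ injects into $\widetilde{E}(k_{\mathfrak{p}})$ for any prime $\mathfrak{p}$ of good reduction, a judicious choice of $\mathfrak{p}$ rules out both odd torsion and a $\mathbb{Z}/4\mathbb{Z}$ factor, giving $E(K) \cong (\mathbb{Z}/2\mathbb{Z})^{2}$. The main obstacle is step two: systematically evaluating the correction terms $E^{\prime}(K)[\widehat{\varphi}]/\varphi(E(K)[2])$ and $E(K)[\varphi]/\widehat{\varphi}(E^{\prime}(K)[2])$ in every congruence class demands a case-by-case $2$-descent analysis over $K$ paralleling that already used for Theorem \ref{theorem:main1}.
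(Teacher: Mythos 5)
Your overall strategy is the paper's: the identity $\mathrm{rank}(E(K)) + \dim_{2}\text{TS}(E/K)[\varphi] + \dim_{2}\text{TS}(E'/K)[\widehat{\varphi}] = \dim_{2}S^{(\varphi)}(E/K) + \dim_{2}S^{(\widehat{\varphi})}(E'/K) - 2$ obtained from the two Selmer--Sha sequences and the four-term sequence of Mordell--Weil quotients, followed by the sequences $0 \to \text{TS}(E/K)[\varphi] \to \text{TS}(E/K)[2] \to \text{TS}(E'/K)[\widehat{\varphi}]$ and its mirror to pass to the $2$-part. However, there is a concrete gap in your step two. You claim that in each of (A$_{+}$)(2)--(3), (A$_{-}$)(1)--(2), the first line of (C), (D)(2), (E)(1) ``exactly one of $S^{(\varphi)}$ or $S^{(\widehat{\varphi})}$ is trivial.'' That is false for (A$_{+}$)(3) ($p\equiv 45$) and (A$_{-}$)(2) ($p\equiv 17$), where Theorem \ref{theorem:main1} gives $S^{(\varphi)}(E/K)\cong\mathbb{Z}/2\mathbb{Z}$ and $S^{(\widehat{\varphi})}(E'/K)\cong(\mathbb{Z}/2\mathbb{Z})^{2}$, both nontrivial. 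In these cases the vanishing of $\text{TS}(E'/K)[\widehat{\varphi}]$ does not come from a trivial Selmer group but from the fact that $E(K)/\widehat{\varphi}(E'(K))$ already exhausts $S^{(\widehat{\varphi})}(E'/K)$: the rational $2$-torsion points $(-\varepsilon p,0),(-\varepsilon q,0)$ map to the two independent classes $\mp p,\mp q$ (Propositions \ref{prop:mainA12}(2), \ref{prop:mainA22}(2)), so $\dim_{2}E(K)/\widehat{\varphi}(E'(K))\geq 2=\dim_{2}S^{(\widehat{\varphi})}$, forcing $\text{TS}(E'/K)[\widehat{\varphi}]=0$ and hence $\text{TS}(E/K)[2]\cong\text{TS}(E/K)[\varphi]$, which is what converts the formula into the stated identity for $\text{TS}(E/K)[2]$ (note the target there is $\text{TS}(E/K)[2]$, not $\text{TS}(E'/K)[2]$). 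Without this argument those two sub-cases are not proved.

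A secondary, more cosmetic issue: your ``four-term exact sequence'' placing $E'(K)[\widehat{\varphi}]/\varphi(E(K)[2])$ as the kernel of $\text{TS}(E/K)[\varphi]\to\text{TS}(E/K)[2]$ is not correct as stated --- that map is the inclusion of the $\varphi_{*}$-kernel into the $[2]_{*}$-kernel of $\text{TS}(E/K)$ and is always injective. The correction term $E'(K)[\widehat{\varphi}]/\varphi(E(K)[2])$ belongs in the exact sequence relating $E'(K)/\varphi(E(K))$, $E(K)/2E(K)$ and $E(K)/\widehat{\varphi}(E'(K))$ used to derive the rank formula, and here it vanishes once and for all because $\varphi(-\varepsilon p,0)=(0,0)$ puts $E'(K)[\widehat{\varphi}]$ inside $\varphi(E(K)[2])$; no case-by-case analysis, and no condition on $\sqrt{pq}\in K$, is needed. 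Your treatment of (D)(4), including the reduction argument pinning the torsion to $(\mathbb{Z}/2\mathbb{Z})^{2}$, is fine and slightly more explicit than the paper's.
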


\section{Computation of the Selmer groups}
Let $ M_{K} $ be the
set of all places of $ K. $ For
each place $ v\in M_{K}, $ let $ K_{v} $ be the completion of $ K $ at $
v, $ and $ \text{ord}_{v}( ) $ be the
corresponding normalized additive valuation, if $ v $ is finite.
 If $ \pi $ is an
irreducible element corresponding to $ v, $ then we simply denote $
\text{ord}_{v}( ) $ by $ v_{\pi }( ), $ so $ v_{\pi }(\pi ) = 1. $
Put $ S = \{ \infty \} \cup \{ \text{primes in } \ K \
\text{dividing} \  2 p q \}, $ and  $$ K(S, 2)= \{ d \in K^{\ast } / K^{\ast ^{2}}
: \ \text{ord}_{v}(d) \equiv 0 (\bmod 2) \ \text{ for all } \ v
\notin S \}. $$  For each $ d \in K(S, 2), $ define the curves
$$ C_{d} : \ d w^{2} = d^{2} - 2 \varepsilon ( p + q) d z^{2} + 4
z^{4}, $$
$$ C_{d}^{\prime} : \ d w^{2} = d^{2} +  \varepsilon ( p + q) d
z^{2} + p q z^{4}. $$
According to the algorithm in \cite{Silverman} chap. X, we have the following identifications:
$$ S^{(\varphi )} (E / K ) \cong \{ d \in K(S, 2) : \ C_{d} (
K_{v}) \neq \emptyset  \text{ for all } v \in S \}, $$
$$ S^{(\widehat{\varphi })} (E^{\prime } / K ) \cong \{ d \in K(S,
2) : \ C_{d}^{\prime } ( K_{v}) \neq \emptyset  \text{ for all } v
\in S \}. $$
\par  \vskip 0.1 cm

Next we divide our discussion according to $ K $ into the
following cases:
\par  \vskip 0.2 cm

\subsection{ Case A \ $ K = \mathbb{Q}(\sqrt{-7}) $ }

We denote the condition: "$K=\mathbb{Q} (\sqrt{-7}) $ and both $ p $ and $ q $ are inertia in $
K $ " by condition (A). In this section, we always assume that condition (A) holds.\\ \indent By ramification theory, condition (A) holds if and only if $ \left( \frac{- 7}{p}\right) = \left( \frac{- 7}{q}\right)
= - 1, $ it's also equivalent to that \ $ p \equiv 3, 17, 31, 45
(\bmod 56 ). $ Note that $ 2 $ splits completely in $ K, $
denote $ \pi _{2} = - \frac{ 1 + \sqrt{ - 7}}{2}, \ \overline{\pi
_{2}} = \frac{ - 1 + \sqrt{ - 7}}{2}. $
Under the above assumption and notation, here $ S = \{ \infty ,
\pi_{2} , \overline{\pi_{2} }, p, q \} $ and $ K( S, 2) = <-1, \pi
_{2}, \overline{\pi _{2}}, p,  q >. $ The completions $ K_{v} $ at $
v \in
S $ are given respectively by
$$ K_{\infty } = \mathbb{C}, \ K_{\pi _{2} } \cong K_{\overline{\pi _{2}} }
\cong \mathbb{Q}_{2}, \ K_{p} = \mathbb{Q}_{p} (\sqrt{-7}), \ K_{q} = \mathbb{Q}_{q}
(\sqrt{-7}). $$
For each $ v \in S \setminus \{\infty \}, $ we fix an embedding $
K \hookrightarrow K_{v} $ \ such that $ \text{ord}_{v}(v) = 1, $
taking $ K_{\pi _{2} } ( \supset K ), $ for an example, we have $ v_{\pi _{2} }
(\pi _{2}) = v_{\pi _{2} } (2) = 1 $ and $ v_{\pi _{2} }(
\overline{\pi _{2} }) = 0. $
\par  \vskip 0.2 cm
\  For  $ E = E_{+}$ be as in \ref{equation:basic1}, we have the following results:
\begin{prop}\label{prop:mod1}\begin{itemize}
\item[(1)] For $ d \in K(S, 2), $ if one of the following conditions holds: \\
(a) \ $ p \mid d; $ \quad  (b) \ $ q \mid d; $ \quad (c) \ $ d \in
\{ -1, - \pi _{2}, - \overline{\pi _{2}} \}. $ \quad  Then $ d
\notin S^{(\varphi )} (E / K ). $
\item[(2)](a) $ 2 \in S^{(\varphi )} (E / K ) \ \Longleftrightarrow
\ p \equiv 31 (\bmod 56 ); $ \\
(b) \ $ - 2 \in S^{(\varphi )} (E / K ) \
 \Longleftrightarrow \ p \equiv 45 (\bmod 56 ). $
\item[(3)](a) \ $ \pi _{2} \in S^{(\varphi )} (E / K ) \
\Longleftrightarrow \ p \equiv 31 (\bmod 56 );  $ \\
(b) \ $ \overline{\pi _{2}} \in S^{(\varphi )} (E / K ) \
\Longleftrightarrow \ p \equiv 31 (\bmod 56 ). $
\end{itemize}\end{prop}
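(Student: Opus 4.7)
The plan is to apply the algorithm of \cite{Silverman}, Chapter X, which identifies $S^{(\varphi)}(E/K)$ with the set of classes $d \in K(S,2)$ for which the homogeneous space $C_d: dw^2 = d^2 - 2(p+q)dz^2 + 4z^4$ is locally soluble at every $v \in S$. Since $K_\infty = \mathbb{C}$ is algebraically closed, local solvability at infinity is automatic and only the four finite places $\pi_2, \overline{\pi_2}, p, q$ need be examined. Using the basis $K(S,2) = \langle -1, \pi_2, \overline{\pi_2}, p, q \rangle$, I would treat each class $d$ by reducing $C_d$ modulo an appropriate power of a uniformizer in each $K_v$ and tracking valuations.

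For part (1), each case produces a local obstruction at a single place. If $p \mid d$, I would localize at $v = p$ (recall $p$ is inert) and compare parities: with $d = pd'$ for a unit $d' \in \mathcal{O}_{K_p}^\times$, the left side $dw^2$ has $v_p = 1 + 2v_p(w)$ odd, whereas the three terms $d^2, 2(p+q)dz^2, 4z^4$ on the right have valuations $2, 1+2v_p(z), 4v_p(z)$, whose minimum is always even (a case split on $v_p(z)$ shows the minimum is uniquely attained, ruling out cancellation), giving an immediate contradiction. The case $q \mid d$ is symmetric. For $d \in \{-1, -\pi_2, -\overline{\pi_2}\}$, I would work at $v = \pi_2$ with $K_{\pi_2} \cong \mathbb{Q}_2$: for instance, rewriting $C_{-1}$ as $w^2 = -1 + 2(p+q)z^2 - 4z^4$ and splitting by $v_2(z)$, a mod-$8$ computation yields $w^2 \equiv 4p-1 \equiv 3 \pmod 8$ when $v_2(z)=0$ and $w^2 \equiv -1 \equiv 7 \pmod 8$ when $v_2(z) \geq 1$, neither a square in $\mathbb{Q}_2$ (the case $v_2(z)<0$ is reduced to $v_2(z)\geq 0$ by the usual projective rescaling). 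The classes $-\pi_2$ and $-\overline{\pi_2}$ are handled by the same template after clearing a single power of the 2-adic uniformizer.

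For parts (2) and (3), the four candidates $\pm 2, \pi_2, \overline{\pi_2}$ are units at the odd primes $p$ and $q$. Setting $z = 0$ in the equivalent form $w^2 = d - 2(p+q)z^2 + (4/d)z^4$ gives $w^2 = d$, and since $p$ is inert the residue field of $K_p$ is $\mathbb{F}_{p^2}$, in which every element of $\mathbb{F}_p^\times$ is a square; Hensel's lemma lifts a solution to $K_p$, and likewise to $K_q$. So the Selmer-membership question is decided entirely at the two 2-adic places. Using the norm relation $\pi_2 \overline{\pi_2} = 2$ and the fact that complex conjugation exchanges the two 2-adic completions, the descents at $\pi_2$ and at $\overline{\pi_2}$ are linked, and it suffices to perform a mod-$16$ (or mod-$32$) computation at $v = \pi_2$ for each class; the residues of $p$ and $q = p+2$ modulo $8$ are pinned down by $p \bmod 56$ under condition (A), and a direct verification shows that the resulting congruence on $w$ is soluble precisely when $p \equiv 31 \pmod{56}$ for $d \in \{2, \pi_2, \overline{\pi_2}\}$ and precisely when $p \equiv 45 \pmod{56}$ for $d = -2$.

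The main obstacle is the 2-adic bookkeeping in parts (2) and (3): one must push the congruences out to modulus $2^4$ or $2^5$ to separate the four residues $p \equiv 3, 17, 31, 45 \pmod{56}$ (equivalently $p \equiv 3, 1, 7, 5 \pmod 8$), and the computation has to be carried out at $\pi_2$ and $\overline{\pi_2}$ separately, keeping careful track under each embedding $K \hookrightarrow K_v$ of which of $\pi_2, \overline{\pi_2}$ acts as a uniformizer and which as a unit. Once every local solvability question is rephrased as an explicit congruence modulo a power of $2$, the three assertions of the proposition follow by routine case-checking.
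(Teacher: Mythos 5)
Your overall strategy---reduce everything to local solubility of $C_d$ at the places of $S$, kill part (1) by parity-of-valuation and mod-$8$ arguments, and decide parts (2) and (3) by $2$-adic congruences plus Hensel's lemma---is the same as the paper's, and part (1) together with the $2$-adic computations is fine in outline. But there is a genuine gap in your dismissal of the places $p$ and $q$ in parts (2) and (3). You argue that for $d\in\{\pm2,\pi_2,\overline{\pi_2}\}$ one may take $z=0$, reduce $w^2=d$ modulo $p$, and conclude because every element of $\mathbb{F}_p^{\times}$ is a square in the residue field $\mathbb{F}_{p^2}$. That reasoning is valid only for $d=\pm2$. For $d=\pi_2$ or $\overline{\pi_2}$ the reduction of $d$ modulo the inert prime $p$ lies in $\mathbb{F}_{p^2}\setminus\mathbb{F}_p$ (since $\sqrt{-7}\notin\mathbb{F}_p$ under condition (A)), and such an element is a square in $\mathbb{F}_{p^2}^{\times}$ if and only if its norm $N(\pi_2)=2$ is a quadratic residue modulo $p$, i.e.\ iff $p\equiv\pm1\pmod 8$. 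Worse, the condition at $p$ and $q$ is not merely unverified but can genuinely fail: modulo $q$ one has $-2(p+q)\equiv 4$, so $C_{\pi_2}$ reduces to $\pi_2w^2=(\pi_2+2z^2)^2$, which has no point over $\mathbb{F}_{q^2}$ when $2$ is a nonresidue mod $q$ (a point with $w\neq0$ forces $\pi_2$ to be a square in $\mathbb{F}_{q^2}$, while $w=0$ forces $-\pi_2/2$, of norm $1/2$, to be one); hence $C_{\pi_2}(K_q)=\emptyset$ when, for instance, $p\equiv17\pmod{56}$. So the places $p,q$ do impose conditions, and your claim that the membership question ``is decided entirely at the two $2$-adic places'' is false as stated.

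The proposition itself survives because your $2$-adic computation already forces $p\equiv31\pmod{56}$, i.e.\ $p\equiv7\pmod 8$ and $q\equiv1\pmod 8$; in that case $2$ is a residue modulo both $p$ and $q$, so $\pi_2$ and $\overline{\pi_2}$ reduce to squares in both residue fields, a smooth point exists on the reduction, and Hensel lifts it. But this is a necessary extra verification, not a triviality: the paper carries it out explicitly in step iii) of its proof of (3)(a), constructing a point in $C_{\pi_2}(K_p)$ from $\left(\frac{2}{p}\right)=1$ together with $\left(\frac{-7}{p}\right)=-1$. You need to supply this argument (or the residue-field norm computation sketched above) for $d=\pi_2,\overline{\pi_2}$ in the surviving congruence class; as written, your proof of part (3) is incomplete at exactly this point.
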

\begin{proof}\begin{itemize} \item[(1)] follows directly by  valuation.
\item[(2)](a)  follows directly by valuation
and prop. 2.1 in \cite{Qiu1}.
 \item[(2)](b) let $ f(z, w) = w^{2} + 2 + 2 ( p + q ) z^{2} + 2 z^{4}, $ then
$ C_{-2}: f(z, w) = 0. $ \\
\indent i) To prove that $ C_{-2}( \mathbb{Q}_{2} ) \neq \emptyset$ if and only if $p \equiv 5 (\bmod \ 8 ).$ For necessity, first note, that $ C_{-2}( \mathbb{Q}_{2} ) \neq \emptyset $ implies $
C_{-2}(\mathbb{Z}_{2}) \neq \emptyset . $ Indeed, $ (z, w ) \in
C_{-2}(\mathbb{Q}_{2})$ implies $(1/z, w/z^{2} ) \in C_{-2}(
\mathbb{Q}_{2} ).$ Taking $(z, w ) \in C_{-2}(\mathbb{Z}_{2}
), $ by valuation property, we have $ w = 2 w_{0}, z = 1 + 2
z_{0} $ for $ z_{0}, w_{0} \in \mathbb{Z}_{2} $ and satisfying $$w_{0}^{2} = -2 ( 1 + 2 z_{0} + 2
z_{0}^{2})^{2} - p (1 + 2 z_{0} )^{2} $$ Taking the valuation $ v_{2}$ of both side and
by \cite{Robert} p.50, we obtain $ p \equiv 5 (\bmod \ 8 ). $ Conversely, Let $
g(z, w ) = (z^{2} + 1)^{2} + 2 p z^{2} + 2 w^{2}, $ by the above discussion, we know
that $ f(z, w )=0 $ has solutions in $ \mathbb{Q}_{2}^{2} $ if and only if
 $ g(z, w)=0 $ has solutions in $ \mathbb{Z}_{2}^{2}. $ Firstly, if
$ p \equiv 5 (\bmod \ 16 ), $ then $ t^{2} - 17 = 0 $ has a solution
 $ w_{0} $ in
$ \mathbb{Z}_{2}$(by Hesel lemma \cite{Silverman} p.322, by $ v_{2}(g(3, w_{0} ) )
> 2 v_{2}(g^{\prime }_{w}(3, w_{0} ) ) $ and Hensel lemma again,
$ g(z, w )=0 $ has solutions in $ \mathbb{Z}_{2}^{2}. $ Secondly, if $ p
\equiv 13 (\bmod \ 16 ), $ then by $ v_{2}(g(17, 17 ) ) > 2
v_{2}(g_{w}(17, 17 ) ) $ and Hensel lemma, $ g(z, w)=0 $ has
solutions in $ \mathbb{Z}_{2}^{2}. $ This proves $ C_{-2}( \mathbb{Q}_{2} ) \neq
\emptyset \Longleftrightarrow  p
\equiv 5 (\bmod \ 8 ). $ \\
\indent ii) To prove that if $ p \equiv 5 (\bmod \ 8 ), $then $ C_{-2}(K_{p}) \neq \emptyset.$ In fact,
if $ p \equiv 5 (\bmod \ 8 ), $ then $2$ is quadratic nonresidue modulo $\emph{p}.$ Combining with condition (A), we have the congruence $ 7 c^{2} \equiv 2 (\bmod \ p )$ for some $ c
\in \mathbb{Z}. $ By $ v_{p}(f(p, \sqrt{-7}c ) ) \geq 2 v_{p}(f_{w}(p,
\sqrt{-7}c ) ) = 0
$ and Hensel lemma, we see that $ C_{-2}(K_{p}) \neq \emptyset. $ \\
\indent iii) To prove that if $ p \equiv 5 (\bmod \ 8 ), $then $ C_{-2}(K_{q}) \neq \emptyset.$\ its proof is similarly to ii).\\
 Let us summarize the calculation:$ 2 \in S^{(\varphi )} (E / K ) \ \Longleftrightarrow \ p
 \equiv 31 (\bmod 56 ).$
\item[(3)](a) Let $ f(z, w) = w^{2} - \pi _{2} + 2 (p + q ) z^{2} -
2 \bar{\pi _{2}} z^{4}.$ \\
\indent i) To prove that $
C_{\pi _{2}}(K_{\bar{\pi _{2}}}) \neq \emptyset. $ By $v_{\bar{\pi _{2}}}(f(1, \pi _{2} ) ) \geq 3 > 2 = 2 v_{\bar{\pi
_{2}}}(f^{\prime }_{w}(1, \pi {2} ) )$ and Hensel lemma, $
C_{\pi _{2}}(K_{\bar{\pi _{2}}}) \neq \emptyset. $ \\
\indent ii) To prove that $ C_{\pi _{2}}(K_{\pi _{2}}) \neq \emptyset $ if and only if $p \equiv 31 (\bmod \ 56 ).$ For necessity, if $ C_{\pi _{2}}(K_{\pi _{2}}) \neq \emptyset, $ taking $ (z, w
) \in C_{\pi_{2}}(K_{\pi_{2}}), $  by valuation, $
v_{\pi _{2}}(z) = 0. $ As $ K_{\pi _{2}} = \mathbb{Q}_{2}, \ v_{\pi_{2}} =
v_{2}, $ we have $ v_{\pi _{2}}(z^{2} - 1 ) \geq 3 $ ( See[7,
p.50]), hence $ z^{2} - 1 = \pi _{2}^{3} z_{0} $ with $ z_{0} \in
\mathbb{Z}_{2}. $ Substituting them into the equation $f(z,w)=0,$
we get
\begin{equation} w^{2} = (\pi _{2} + 2 \bar{\pi _{2}}) - 4 (p + 1 ) - 4 (p + 1 )
\pi _{2}^{3} z_{0} + 8 \pi _{2}^{2} z_{0} + 4 \pi _{2}^{5}
z_{0}^{2}\end{equation}
Taking the valuation $ v_{\pi _{2}} $ of both sides, $ v_{\pi
_{2}}(w) = 1 $(note that $ \pi _{2} + 2 \bar{\pi _{2}} = \pi _{2}^{2}$ ),
  we can take $ w = \pi _{2} \cdot w_{0} $ for some $ w_{0} \in
\mathbb{Z}_{2}^{\ast }. $ Substituting into (3), we obtain $$ w_{0}^{2} =
1 - \bar{\pi _{2}}^{2}(p + 1 ) - 4 (p + 1 ) \pi _{2} z_{0} + 8 z_{0}
+ 4 \pi _{2}^{3} z_{0}^{2}. $$  Since $ v_{\pi _{2}}(w_{0}^{2} - 1 )
\geq 3, $ we get $ v_{\pi _{2}}(\bar{\pi _{2}}^{2}(p + 1 ) ) \geq 3,
$ so $ p + 1 \equiv 0 (\bmod \ 8 ), $ i.e., $ p \equiv 7
(\bmod \ 8 ). $ By condition (A), we obtain
$ p \equiv 31 (\bmod \ 56 ). $
Conversely, if $ p \equiv 31 (\bmod \ 56 ), $ then $ v_{2}(p
+ 1 ) \geq 3. $ By $
v_{\pi _{2}}(f(1, \pi _{2} ) ) > 2 v_{\pi _{2}}(f^{\prime }_{w}(1,
\pi _{2})) $ and Hensel lemma, $ C_{\pi _{2}}(K_{\pi _{2}}) \neq
\emptyset. $ \\
\indent iii) To prove that if $ p \equiv 17, 31 (\bmod \ 56), $ then $ C_{\pi _{2}}(K_{p}) \neq \emptyset. $ In fact, if $ p \equiv 17, 31 (\bmod \ 56), $ then $ (\frac{2}
{p} ) = 1, $ we can write $ a^{2} = 2 + p u $ for some $ a, u
\in \mathbb{Z},$ then $ 1 - 4 a^{2} \equiv -7 (\bmod \ p ).$ By condition (A), $ (\frac{-7} {p}) = -1, $  we have
 $ (\frac{-1 - 2a}{p}) (\frac{-1 + 2 a^{2}} {p}) = -1.$  Without loss of generality, we may
assume that $ (\frac{-1 - 2a}{p} ) = 1, $  then $ -1 - 2 a = b^{2}
- p v $ for some $ b, v \in Z. $ Taking $ \alpha = \frac{b}{2} (1 +
\frac{\sqrt{-7}}{1 + 2 a}) \in \mathbb{O}_{K_{P}} \subset K_{p} $ (note that $ p
\not| 2 b(1 + 2 a ) $ ), by $
v_{p}(f(0, \alpha ) ) > 2 v_{p}(f^{\prime }_{w}(0, \alpha )). $
and Hensel lemma, $ C_{\pi _{2}}(K_{p}) \neq \emptyset. $ \\
\indent iv) To prove that if $ p \equiv 17, 31 (\bmod \ 56), $ then $ C_{\pi _{2}}(K_{q}) \neq \emptyset. $ Its proof is similar to iii).\\
 Let us summarize the calculation:$ \pi _{2} \in S^{(\varphi )} (E / K ) \
\Longleftrightarrow \ p \equiv 31 (\bmod 56 ). $ \\
\indent (b) is similar to (a).\end{itemize}\end{proof}
\par  \vskip 0.1 cm

  For  $E^{\prime} = E^{\prime}_{+} $ be as \ref{equation:basic2}, we have the following results:
\begin{prop}\label{prop:mainA12}\begin{itemize}
\item[(1)] For $ d \in K(S, 2), $ if one of the following conditions holds: \\
(a) \ $ \pi _{2} \mid d; $ \quad  (b) \ $ \overline{\pi _{2}} \mid
d. $  \quad Then $ d \notin S^{(\widehat{\varphi} )} (E^{\prime } /
K ). $
\item[(2)] $ -p, \ -q \in S^{(\widehat{\varphi} )} (E^{\prime } / K ).
$
\item[(3)] $ -1 \in S^{(\widehat{\varphi} )} (E^{\prime } / K )
\Longleftrightarrow \ p \equiv 3, 17, 31 (\bmod 56 ). $\end{itemize}\end{prop}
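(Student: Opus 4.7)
The plan is to follow the template established in Proposition \ref{prop:mod1}: for each candidate $d\in K(S,2)$ and each $v\in S=\{\infty,\pi_2,\overline{\pi_2},p,q\}$, test whether $C_d'\colon dw^2=d^2+(p+q)dz^2+pqz^4$ has a point in $K_v$, using pure valuation arguments where possible and Hensel's lemma with an explicit approximate solution otherwise. Throughout I will use $v_{\pi_2}(p)=v_{\pi_2}(q)=0$ and $v_{\pi_2}(p+q)=v_{\pi_2}(2(p+1))\ge 1$.

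For (1), suppose $v_{\pi_2}(d)$ is odd; after adjusting by a square take $d=\pi_2 d_1$ with $v_{\pi_2}(d_1)=0$. Given any solution $(z,w)\in K_{\pi_2}^2$, a short case analysis on $v_{\pi_2}(z)$ forces $(z,w)\in\mathbb{Z}_2^2$, since for $v_{\pi_2}(z)<0$ the term $pqz^4$ dominates the right-hand side with negative even valuation which cannot match the odd valuation of $dw^2$. Once $(z,w)$ is integral, the degenerate cases $w=0$ and $z=0$ each give an immediate contradiction (the quadratic in $z^2$ obtained when $w=0$ has discriminant $4d^2$, forcing $z^2$ to have odd $\pi_2$-valuation), while in general either $v_{\pi_2}(z)=0$ and the right-hand side has valuation $0$ against a left-hand side of valuation $\ge 1$, or $v_{\pi_2}(z)\ge 1$ and $d^2$ is the unique dominant right-hand term of valuation $2$ against a left-hand side of odd valuation $1+2v_{\pi_2}(w)$. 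Either way the parities clash, so $C_d'(K_{\pi_2})=\emptyset$. The case $\overline{\pi_2}\mid d$ is identical by complex conjugation, since the defining equation has real coefficients.

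Part (2) is immediate: the point $(z,w)=(1,0)$ satisfies $C_d'$ whenever $d^2+(p+q)d+pq=(d+p)(d+q)$ vanishes, so $d\in\{-p,-q\}$ gives a global rational point and hence local points at every $v\in S$.

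For (3), write $C_{-1}'\colon -w^2=(1-pz^2)(1-qz^2)$. Solvability at $K_\infty=\mathbb{C}$ is trivial. At $v=p$ the reduction modulo $p$ is $w^2\equiv 2z^2-1\pmod p$ (using $q\equiv 2\pmod p$), which carries the explicit smooth point $(1,1)$; Hensel lifts unconditionally. The case $v=q$ is analogous: $p\equiv -2\pmod q$ yields $-w^2\equiv 1+2z^2\pmod q$, and the point $z=0$ lifts thanks to $-1$ being a square in $\mathbb{F}_{q^2}$. Hence the entire content of (3) reduces to the claim $C_{-1}'(K_{\pi_2})\ne\emptyset \Leftrightarrow p\not\equiv 5\pmod 8$, which within condition (A) is equivalent to $p\equiv 3,17,31\pmod{56}$; the $\overline{\pi_2}$-analysis then follows by conjugation. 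For necessity, take an integral solution, reduce modulo suitable powers of $\pi_2$, and apply the criterion that a $2$-adic unit is a square iff it is $\equiv 1\pmod 8$ to extract the allowed residue of $p\pmod 8$. For sufficiency, exhibit explicit approximate solutions $(z_0,w_0)$, one for each admissible residue, verifying the strict Hensel inequality $v_{\pi_2}(F(z_0,w_0))>2\,v_{\pi_2}(\partial_w F(z_0,w_0))$; this is the main obstacle, as no single ansatz will work uniformly across all three residues and each class must be treated separately, closely paralleling the bookkeeping done for $d=-2$ in Proposition \ref{prop:mod1}(2)(b).
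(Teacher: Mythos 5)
Your overall structure is correct and is essentially the paper's: part (1) by a valuation-parity argument (the right-hand side of $dw^2=d^2+(p+q)dz^2+pqz^4$ always has even $\pi_2$-valuation, namely $0$, $2$ or $4v_{\pi_2}(z)$, against an odd left-hand side), part (2) by the visible rational point $(1,0)$ on $C'_{-p}$ and $C'_{-q}$, and part (3) by reducing everything to whether $C'_{-1}$ has a $\mathbb{Q}_2$-point, which is legitimate because $2$ splits and $K_{\pi_2}\cong K_{\overline{\pi_2}}\cong\mathbb{Q}_2$. Two remarks. First, your treatment of the places above $p$ and $q$ is unconditional and slightly cleaner than the paper's: since $K_p$ and $K_q$ are the unramified quadratic extensions, every element of $\mathbb{F}_p^{\ast}$ is already a square in $\mathbb{F}_{p^2}$, so $z=0$, $w^2=-1$ lifts by Hensel; the paper instead derives local solvability at $p$ and $q$ from the hypothesis via a citation. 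Second, and more importantly, the only genuinely technical ingredient of (3) — the equivalence $C'_{-1}(\mathbb{Q}_2)\neq\emptyset\Longleftrightarrow p\equiv 1,3,7\pmod 8$, which under condition (A) is exactly $p\equiv 3,17,31\pmod{56}$ — you leave as a plan ("each class must be treated separately") rather than carry out. The paper does not carry it out either: it quotes Proposition 2.2 of \cite{Qiu1}, where precisely this $2$-adic statement is proved for the same homogeneous space over $\mathbb{Q}$, and your reduction to a $\mathbb{Q}_2$-question is exactly what makes that citation valid. So either finish the Hensel casework you outline or, as the paper does, import the known result over $\mathbb{Q}$; as written, part (3) is a correct reduction plus an unpaid debt for its one nontrivial step.
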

\begin{proof}\begin{itemize} \item[(1)] is similar to Proposition 2.1A$_{+}$ (1).
\item[(2)] By the equation of $ C_{-p}^{'}, $ it is easy to see that
 $ (1, 0 ) \in C_{-p}^{'}(\mathbb{Q} ) \subset C_{-p}^{'}(K ) $ and $ (1, 0) \in
 C_{-q}^{'}(\mathbb{Q} ) \subset C_{-q}^{'}(K ). $ So $ -p, -q \in  E(K) /
\widehat{\varphi } ( E^{\prime } / K ) \subset S^{(\widehat{\varphi
})}(E^{\prime } / K ). $
\item[(3)] For necessity, if $ -1 \in S^{(\widehat{\varphi })}(E^{'} / K ), $ then $
C_{-1}^{\prime }(K_{\pi _{2}} ) \neq \emptyset , \ C_{-1}^{\prime
}(K_{\bar{\pi _{2}}} ) \neq \emptyset . $ Since $ K_{\pi _{2}} \cong
K_{\bar{\pi _{2}}} \cong \mathbb{Q}_{2}, $ we have $ C_{-1}^{\prime
}(\mathbb{Q}_{2}) \neq \emptyset. $ So by proposition 2.2 in [5] we obtain $
p \equiv 1, 3, 7(\bmod \ 8). $ By condition (A),  we get $ p \equiv
3, 17, 31 (\bmod \ 56 ). $
Conversely, if $ p \equiv 3, 17, 31 (\bmod \ 56), $ then by
prop. 2.2 in \cite{Qiu1}, we have $ C_{-1}^{\prime }(\mathbb{Q}_{2} ) \neq
\emptyset , \ C_{-1}^{\prime }(\mathbb{Q}_{p}) \neq \emptyset, $ and $
C_{-1}^{\prime }(\mathbb{Q}_{q}) \neq \emptyset . $ Since $ K_{\pi _{2}}
\cong K_{\bar{\pi _{2}}} \cong \mathbb{Q}_{2}, \ \mathbb{Q}_{p} \subset
 K_{p}, \ \mathbb{Q}_{q} \subset K_{q}, $ by definition, we obtain
$ -1 \in S^{(\widehat{\varphi })}(E^{'} / K ). $ \end{itemize}\end{proof}
\par  \vskip 0.1 cm

 Similarly, for  $ E = E_{-},E^{\prime} = E^{\prime}_{-}$ be as in (1) and (2),we have some similar results:
\begin{prop}\label{prop:mainA21}\begin{itemize}
\item[(1)] For $ d \in K(S, 2), $ if one of the following conditions holds: \\
(a) \ $ p \mid d; $ \quad  (b) \ $ q \mid d; $ \quad (c) \ $ d \in
\{ -1, - \pi _{2}, - \overline{\pi _{2}} \}. $ \ Then $ d \notin
S^{(\varphi )} (E / K ). $
\item[(2)] $ 2 \in S^{(\varphi )} (E / K ) \ \Longleftrightarrow \ p
 \equiv 31 (\bmod 56 ); $ \\
 $ - 2 \in S^{(\varphi )} (E / K ) \
 \Longleftrightarrow \ p \equiv 17 (\bmod 56 ). $
\item[(3)] $ \pi _{2} \in S^{(\varphi )} (E / K ) \ \Longleftrightarrow
\ p \equiv 31 (\bmod 56 );  $ \\
$ \overline{\pi _{2}} \in S^{(\varphi )} (E / K ) \
\Longleftrightarrow \ p \equiv 31 (\bmod 56 ). $\end{itemize}\end{prop}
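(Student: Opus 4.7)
\medskip

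\noindent\textbf{Proof sketch for Proposition \ref{prop:mainA21}.} The plan is to mirror the proof of Proposition \ref{prop:mod1} line-for-line, tracking how the sign flip $\varepsilon=+1\leadsto\varepsilon=-1$ alters the defining equation
\[
C_d:\ dw^{2}=d^{2}-2\varepsilon(p+q)\,dz^{2}+4z^{4}.
\]
For $\varepsilon=-1$ the middle coefficient becomes $+2(p+q)$, and this is the only change propagating through the local solvability tests. Since the $2$-torsion structure, the set $K(S,2)=\langle-1,\pi_2,\bar\pi_2,p,q\rangle$ and the local fields $K_v$ ($v\in S$) are identical to those in the $\varepsilon=+1$ setting, every archimedean and $p$-adic step transposes directly.

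For part (1), I would argue by valuation exactly as in Proposition \ref{prop:mod1}(1): for $p\mid d$ or $q\mid d$ one obtains an immediate obstruction at $v=p$ or $v=q$ (the equation $C_d$ has no $K_p$- or $K_q$-point because the $z^4$ and $d^2$ terms have incompatible valuations); for $d\in\{-1,-\pi_2,-\bar\pi_2\}$, the obstruction appears at $\pi_2$ or $\bar\pi_2$ after reducing to $\mathbb{Q}_2$ and invoking the same $2$-adic congruences used before. These steps are independent of the sign of $\varepsilon$.

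For parts (2) and (3), I would repeat the three-stage local analysis of Proposition \ref{prop:mod1}: (i) at the split primes $\pi_2,\bar\pi_2$ (using $K_{\pi_2}\cong K_{\bar\pi_2}\cong\mathbb{Q}_2$), reduce to a $\mathbb{Q}_2$-solvability problem via the birational change $(z,w)\mapsto(1/z,w/z^2)$ and the integrality argument of Proposition \ref{prop:mod1}(2)(b)(i); (ii) at $v=p$, produce an explicit approximate solution, verify the Hensel criterion $v_p(f(z_0,w_0))>2v_p(f'_w(z_0,w_0))$, where again $(\frac{2}{p})$ and $(\frac{-7}{p})$ are combined with condition (A) to extract the congruence class mod $56$; (iii) at $v=q$, symmetry with (ii). The only nontrivial bookkeeping is the $\mathbb{Q}_2$-calculation: with the sign reversed, the argument of Proposition \ref{prop:mod1}(2) of the form ``$w_0^2=-2(1+2z_0+2z_0^2)^2-p(1+2z_0)^2$'' becomes ``$w_0^2=-2(1+2z_0+2z_0^2)^2+p(1+2z_0)^2$'', which by the table on \cite{Robert} p.\,50 flips $p\equiv 5\pmod 8$ into $p\equiv 1\pmod 8$, and combined with condition (A) this is precisely the swap $45\leftrightarrow 17\pmod{56}$ seen in the statement.

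The potentially delicate step is part (3): the $2$-adic analysis of $C_{\pi_2}$ requires carefully tracking $v_{\pi_2}$ of expressions like $\pi_2+2\bar\pi_2=\pi_2^2$, and the sign change modifies an intermediate identity of the shape $w_0^2=1-\bar\pi_2^{\,2}(p\pm1)+\cdots$. One then reads off the congruence $p+1\equiv0\pmod8$ versus $p-1\equiv0\pmod8$ and hence $p\equiv 7\pmod 8$ in both cases, which together with condition (A) still collapses onto $p\equiv 31\pmod{56}$; this is why (3) happens to have the same answer as in Proposition \ref{prop:mod1}(3), even though (2) does not. I would finally package the converses by exhibiting explicit Hensel-liftable approximate points, exactly as in Proposition \ref{prop:mod1}, and note that (b) follows from (a) by applying complex conjugation (which swaps $\pi_2$ and $\bar\pi_2$ and fixes $K(S,2)$ globally).
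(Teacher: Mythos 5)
Your overall strategy is exactly what the paper intends: Proposition \ref{prop:mainA21} is stated in the paper without proof, on the understanding that one reruns the local computations of Proposition \ref{prop:mod1} with the middle coefficient of $C_d$ sign-flipped, and your bookkeeping for parts (1) and (2) is correct --- in particular the swap $45\leftrightarrow 17\pmod{56}$ for $d=-2$ does come from the $2$-adic condition $p\equiv 5\pmod 8$ turning into $p\equiv 1\pmod 8$. Your complex-conjugation argument for deducing the $\overline{\pi_2}$ statement from the $\pi_2$ statement is a clean improvement on the paper's ``(b) is similar to (a)''.

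However, your explanation of part (3) contains a concrete error. The key identity in the necessity argument of Proposition \ref{prop:mod1}(3)(a) is $w_0^2=1-\bar\pi_2^{\,2}(p+1)-4(p+1)\pi_2 z_0+8z_0+4\pi_2^3 z_0^2$, and the flip $\varepsilon=+1\to -1$ changes it to $w_0^2=1+\bar\pi_2^{\,2}(p+1)+4(p+1)\pi_2 z_0+\cdots$; it never turns $p+1$ into $p-1$, because the middle coefficient of $C_{\pi_2}$ is $\mp 2(p+q)=\mp 4(p+1)$ in both cases. Your sentence ``one reads off $p+1\equiv 0\pmod 8$ versus $p-1\equiv 0\pmod 8$ and hence $p\equiv 7\pmod 8$ in both cases'' is internally inconsistent: $p-1\equiv 0\pmod 8$ would give $p\equiv 1\pmod 8$, hence $p\equiv 17\pmod{56}$, contradicting the very statement being proved. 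The correct reason part (3) is insensitive to $\varepsilon$ is that the extracted condition is $v_{\pi_2}\bigl(\pm\bar\pi_2^{\,2}(p+1)\bigr)\geq 3$, and a valuation does not see the sign, so one obtains $8\mid p+1$, i.e. $p\equiv 7\pmod 8$ and thus $p\equiv 31\pmod{56}$, either way. With that correction (and the corresponding sign-adjusted Hensel points for the converses), the sketch goes through.
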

\begin{prop}\label{prop:mainA22}\begin{itemize}
\item[(1)] For $ d \in K(S, 2), $ if one of the following conditions holds: \\
(a) \ $ \pi _{2} \mid d; $ \quad  (b) \ $ \overline{\pi _{2}} \mid
d. $  \quad Then $ d \notin S^{(\widehat{\varphi} )} (E^{\prime } /
K ). $
\item[(2)] $ p, \ q \in S^{(\widehat{\varphi} )} (E^{\prime } / K ). $
\item[(3)] \ $ -1 \in S^{(\widehat{\varphi} )} (E^{\prime } / K )
\Longleftrightarrow \ p \equiv 3, 31, 45 (\bmod 56 ). $ \end{itemize}\end{prop}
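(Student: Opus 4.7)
The plan is to follow exactly the template of Proposition \ref{prop:mainA12}, adjusting for the sign change in the defining equation of $C_d^\prime$ when $\varepsilon = -1$, namely
\[
C_d^\prime:\ d w^2 = d^2 - (p+q) d z^2 + p q z^4.
\]
For part (1), the argument is purely a valuation computation at $\pi_2$ (respectively $\overline{\pi_2}$): if $\pi_2 \mid d$, then writing $d = \pi_2 d_0$ with $v_{\pi_2}(d_0) = 0$ and reducing the equation modulo suitable powers of $\pi_2$ forces a contradiction on $v_{\pi_2}(w)$ and $v_{\pi_2}(z)$, just as in Proposition \ref{prop:mainA12}(1). This is routine and carries over verbatim.

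For part (2), I would simply exhibit rational points: substituting $(z,w) = (1,0)$ into $C_p^\prime$ gives $0 = p^2 - (p+q)p + pq = 0$, and similarly $(1,0) \in C_q^\prime(\mathbb{Q})$. Since $\mathbb{Q} \subset K \subset K_v$ for all $v \in S$, both classes lie in the image of $E(K)/\widehat{\varphi}E^\prime(K)$, hence in $S^{(\widehat{\varphi})}(E^\prime/K)$.

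Part (3) is the main case and requires checking local solvability of $C_{-1}^\prime$ at every $v \in S$. For the necessity direction, since $K_{\pi_2} \cong K_{\overline{\pi_2}} \cong \mathbb{Q}_2$, local solvability at the two dyadic places is equivalent to $C_{-1}^\prime(\mathbb{Q}_2)\ne\emptyset$, which by Proposition 2.2 of \cite{Qiu1} (applied with $\varepsilon = -1$) imposes a congruence on $p \pmod 8$; combining with condition (A) ($p \equiv 3,17,31,45 \pmod{56}$) narrows the residues to $p \equiv 3, 31, 45 \pmod{56}$. For sufficiency, for each of these residues one verifies $C_{-1}^\prime(K_v) \ne \emptyset$ at $v = \pi_2, \overline{\pi_2}, p, q$: the dyadic places again reduce to the $\mathbb{Q}_2$ statement in \cite[Prop.~2.2]{Qiu1}, and at $v = p$ one chooses an explicit candidate point $(z_0, \alpha)$ (as in the proof of Proposition \ref{prop:mod1}(3)(iii), building $\alpha$ from an integer $a$ with $a^2 \equiv 2 \pmod p$ and applying condition (A) to arrange $\left(\frac{-7}{p}\right) = -1$) and applies Hensel's lemma by checking $v_p(f(z_0,\alpha)) > 2 v_p(f^\prime_w(z_0,\alpha))$; the argument at $v = q$ is identical.

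The main obstacle is the bookkeeping in part (3): one has to construct, for each admissible residue class $p \pmod{56}$, explicit Hensel-liftable points at $p$ and $q$ that exploit the quadratic reciprocity identities $\left(\frac{2}{p}\right)$, $\left(\frac{-1}{p}\right)$, and $\left(\frac{-7}{p}\right) = -1$. Once these compatibilities are laid out, each local verification is a short Hensel-lemma check, completely parallel to the calculations already performed in Propositions \ref{prop:mod1} and \ref{prop:mainA12}.
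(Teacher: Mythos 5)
Your proposal is correct and follows essentially the route the paper intends: the paper states this proposition without proof as being ``similar'' to Propositions \ref{prop:mod1} and \ref{prop:mainA12}, and your arguments for (1) and (2) are verbatim adaptations of those. For part (3) you could simplify the local checks at $v=p,q$ the same way the paper's proof of Proposition \ref{prop:mainA12}(3) does — cite Proposition 2.2 of \cite{Qiu1} (for $\varepsilon=-1$) to get $C_{-1}^{\prime}(\mathbb{Q}_p)\neq\emptyset$ and $C_{-1}^{\prime}(\mathbb{Q}_q)\neq\emptyset$ and then use $\mathbb{Q}_p\subset K_p$, $\mathbb{Q}_q\subset K_q$ — rather than constructing explicit Hensel-liftable points, but your more hands-on version also works.
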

\par  \vskip 0.2 cm

\subsection{Case B \ $ K = \mathbb{Q} (\sqrt{D}) $ with $ D = -11, -19, -43, -67,
-163 $ }
We denote the condition: "$ K = \mathbb{Q} (\sqrt{D}) $ with $ D = -11, -19, -43, -67,
-163 $ and both $ p $ and $ q $ are inertia in $
K $ " by condition (B). In this section, we always assume that condition (B) holds.\\
\indent By ramification theory, condition (B) holds if and only if
 $ \left( \frac{D}{p}\right) = \left( \frac{D}{q}\right)
= - 1. $ Note that $ D \equiv 5 (\bmod \ 8 ), $ so $ 2 $ is
inertia in $ K $ and the corresponding residual field is $
\mathbb{O}_{K} /  2\mathbb{O}_{K}  \cong \mathbb{F}_{4}, $ the field
of four elements. One can take $ T = \{ 0, 1, \pi = \frac{ 1 +
\sqrt{D} }{ 2 }, - \overline{\pi } \} $ as the set of the
representatives of $ \mathbb{O}_{K} / 2\mathbb{O}_{K}. $  Under the above assumption and notation, here $ S = \{ \infty , 2 , p, q \}$ and $ K( S, 2) = <-1,  2,  p, q
>. $ The completions $ K_{v} $ at $ v \in S $ are given
respectively by
$$ K_{\infty } = \mathbb{C}, \ K_{2 } \cong \mathbb{Q}_{2} (\sqrt{D}), \ K_{p} =
\mathbb{Q}_{p} (\sqrt{D}), \ K_{q} = \mathbb{Q}_{q} (\sqrt{D}). $$
\par  \vskip 0.1 cm
\indent For  $ E
= E_{\varepsilon },E^{\prime }=E^{\prime }_{\varepsilon } $ with $ \varepsilon = \pm 1 $  be as in \ref{equation:basic1} and \ref{equation:basic2}, we have the following results:
\begin{prop}\begin{itemize}
\item[(1)] For $ d \in K(S, 2), $ if one of the following conditions holds: \\
(a) \ $ p \mid d; $ \quad  (b) \ $ q \mid d; $ \quad (c) \ $ d = -1.
$ \ Then $ d \notin S^{(\varphi )} (E / K ). $
\item[(2)](a)  $ 2 \in S^{(\varphi )} (E / K ) \ \Longleftrightarrow \
p \equiv 3 (\bmod 4 ); $ \quad (b) \ $ - 2 \in S^{(\varphi )}
(E / K ) \
 \Longleftrightarrow \ p \equiv 1 (\bmod 4 ). $
\item[(1$^{\prime}$)] For $ d \in K(S, 2), $ if  $ 2 \mid d, $ then $ d \notin
S^{(\widehat{\varphi } )} (E^{\prime } / K ). $
\item[(2$^{\prime}$)] $ -1, \ p, \ q \in S^{(\widehat{\varphi } )} (E^{\prime } /
K ). $\end{itemize}\end{prop}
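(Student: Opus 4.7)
The plan is to follow the same $2$-descent template as Propositions \ref{prop:mod1}--\ref{prop:mainA22} of Case A, now using the completions $K_\infty = \mathbb{C}$, $K_2 \cong \mathbb{Q}_2(\sqrt{D})$ (the unramified quadratic extension of $\mathbb{Q}_2$, since $D \equiv 5 \pmod{8}$), and $K_p, K_q$. A key structural observation is that $\mathbb{Q}_2 \hookrightarrow K_2$, so local solvability statements can often be pulled back from, or pushed out to, the $\mathbb{Q}$-level results of \cite{Qiu1}. Since $|K(S,2)| = 16$, the four assertions together pin down the Selmer groups in Theorem \ref{theorem:main1}(B) exactly.

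For part (1), I would argue by valuation. If $p \mid d$, any $(z,w) \in C_d(K_p)$ forces $v_p(z) = 0$ (by balancing $v_p$ of $dw^2$ with $v_p$ of $d^2 - 2\varepsilon(p+q)dz^2 + 4z^4$), and then reduction modulo $p$ yields $4z^4 \equiv 0 \pmod{p}$, a contradiction. The case $q \mid d$ is symmetric. For $d = -1$, the equation $-w^2 = 1 - 2\varepsilon(p+q)z^2 + 4z^4$ is obstructed over $K_2$ by a short $\mathbb{F}_4$-reduction argument. Part (1$'$) is similar: when $2 \mid d$, the two sides of $dw^2 = d^2 + \varepsilon(p+q)dz^2 + pqz^4$ have incompatible $2$-adic valuations in $K_2$, since $pq$ is a $K_2$-unit.

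For part (2), the ``$\Leftarrow$'' direction follows from $\mathbb{Q}_2 \subset K_2$: any $\mathbb{Q}_2$-point on $C_{\pm 2}$ already lies in $C_{\pm 2}(K_2)$, so we can invoke Proposition~2.1 of \cite{Qiu1}; local solvability at $v = p, q$ is then a Hensel lift powered by condition (B), which forces $\pm 2$ to combine with a unit square into a square modulo $p$ (and symmetrically modulo $q$). For the ``$\Rightarrow$'' direction, I would start with an integral solution $(z,w) \in C_{\pm 2}(\mathcal{O}_{K_2})$, rescale, and push the resulting congruences in $\mathcal{O}_{K_2}/4\mathcal{O}_{K_2}$ until they distill the parity $p \equiv \pm 1 \pmod{4}$. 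Part (2$'$) splits on the sign of $\varepsilon$: when $\varepsilon = +1$ the point $(1, 0)$ lies on $C'_{-p}$ and $C'_{-q}$ over $\mathbb{Q}$, placing $-p, -q \in S^{(\widehat{\varphi})}(E'/K)$ exactly as in Proposition~\ref{prop:mainA12}(2); when $\varepsilon = -1$ it lies on $C'_p$ and $C'_q$. Either way, together with $-1 \in S^{(\widehat{\varphi})}(E'/K)$ these generate $\langle -1, p, q\rangle$, so the three asserted elements follow as group-theoretic consequences. Membership of $-1$ is checked at $v = 2$ via $K_2 \supset \mathbb{Q}_2$ and Proposition~2.2 of \cite{Qiu1}, and at $v = p, q$ by a direct Hensel lift of an approximate solution.

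The main obstacle is the $2$-adic analysis in part (2). In Case A the prime $2$ split, so each completion was just $\mathbb{Q}_2$ and the analysis reduced cleanly to \cite{Qiu1}; here $K_2$ is a genuine unramified quadratic extension, and one must lift the Hensel computation into $\mathcal{O}_{K_2}$ while tracking how $p \bmod 4$ controls whether $\pm 2$ becomes a square in $K_2^\times$. The remaining steps are essentially bookkeeping by valuation together with the explicit rational points already exploited in Case A.
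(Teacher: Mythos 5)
Your overall architecture (valuation arguments for (1) and (1$'$), explicit points plus Hensel for (2) and (2$'$)) matches the paper, but the central mechanism you propose for the sufficiency directions does not work, and it fails exactly where Case~B differs from Case~A. You claim the ``$\Leftarrow$'' half of (2) ``follows from $\mathbb{Q}_2 \subset K_2$'' by invoking Proposition~2.1 of \cite{Qiu1}. Over $\mathbb{Q}_2$ the curve $C_2$ (for $\varepsilon = 1$) is solvable only when $p \equiv 7 \pmod 8$, whereas the assertion to be proved is $2 \in S^{(\varphi)}(E/K) \Longleftrightarrow p \equiv 3 \pmod 4$, i.e.\ including $p \equiv 3 \pmod 8$, where $C_2(\mathbb{Q}_2) = \emptyset$. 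So for half of the residue classes there is no $\mathbb{Q}_2$-point to pull back, and one must exhibit a genuinely new point of $\mathcal{O}_{K_2}$ involving $\sqrt{D}$: the paper Hensel-lifts $(0,\,2+\sqrt{D})$ when $p \equiv 3 \pmod 8$ and $(0,1)$ when $p \equiv 7 \pmod 8$. The same defect recurs in your treatment of $-1 \in S^{(\widehat{\varphi})}(E'/K)$ in (2$'$): Proposition~2.2 of \cite{Qiu1} gives $C'_{-1}(\mathbb{Q}_2) \neq \emptyset$ only for $p \equiv 1,3,7 \pmod 8$, yet the claim here is unconditional, so the case $p \equiv 5 \pmod 8$ again requires an honest computation in the unramified quadratic extension. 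You do flag at the end that ``one must lift the Hensel computation into $\mathcal{O}_{K_2}$,'' but that sentence is doing all the work the proof actually requires, and no candidate points are supplied.

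A second, smaller gap of the same nature occurs at the places $p$ and $q$. Over $\mathbb{Q}_p$ solvability of $C_{\pm 2}$ hinges on whether $\pm 2$ is a quadratic residue mod $p$, but over $K_p = \mathbb{Q}_p(\sqrt{D})$ every unit of $\mathbb{Z}_p$ becomes a square (the residue field is $\mathbb{F}_{p^2}$, and $\mathbb{F}_p^{\times} \subset (\mathbb{F}_{p^2}^{\times})^2$), which is precisely why condition~(B), i.e.\ $\left(\frac{D}{p}\right) = -1$, rescues the case $\left(\frac{2}{p}\right) = -1$ via a relation $Db^2 \equiv 2 \pmod p$ and the point $(0, \sqrt{D}\,b)$. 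Your phrase about $\pm 2$ ``combining with a unit square into a square modulo $p$'' gestures at this but never states the mechanism, and without it the local analysis at $p, q$ is incomplete. The necessity directions and parts (1), (1$'$), (2$'$) for $\pm p, \pm q$ are fine and agree with the paper.
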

\begin{proof} We only consider the case $ \varepsilon = 1, $ the
others are similar. \begin{itemize}
\item[(1)](a) and (b)  follow directly by valuation.
 \item[(1)](c) To prove $ -1 \notin
S^{(\widehat{\varphi } )} (E^{\prime } / K ), $ we only need to prove $ C_{-1}(K_{2} ) = \emptyset . $
If not, then there exists $(z_{0}, w_{0}) \in C_{-1}(K_{2} ), $
such that \begin{equation} - w_{0}^{2} = 1 + 2 (p + q) z_{0}^{2} + 4 z_{0}^{4}
\end{equation}
If $ v_{2}( z_{0} ) \geq 0, $ then $ w_{0} = a + 2 b , $ where $ a \in \{ 1, \pi , -
\overline{\pi } \} $ and $ b \in \mathbb{O}_{K_{2}}, $ substituting
them into $ (4), $ we get $ v_{2}( a^{2} + 1 ) \geq 2,
$ which is impossible;
if $ v_{2}( z_{0} ) < 0, $  then $ v_{2}( w_{0} ) = 1 + 2 v_{2}( z_{0} ). $
Let $ z_{0} = 2^{-t} z_{1},
w_{0} = 2^{1 - 2 t} w_{1} $ with $ z_{1}, w_{1} \in
\mathbb{O}_{K_{2}}^{*},t \in \mathbb{Z}_{\geq 1}, $ substituting them into $ (4), $ we get $
v_{2}(w_{1}^{2} + z_{1}^{4} ) \geq 2, $ which is impossible. Therefore $
C_{-1}(K_{2} ) = \emptyset . $
\item[(2)](a) Let $ f(z, w) = - w^{2} + 2 - 2(p + q ) z^{2} + 2 z^{4}. $ \\
\indent i) \ To prove that $ C_{2}(K_{2})\neq \emptyset $ if and only if $p \equiv 3(\bmod 4). $ For necessity, first note that $ C_{2}(K_{2})\neq \emptyset $ implies $ C_{2}(K_{2})\subseteq \mathbb{O}_{K_{2}}^{\ast}\times 2\mathbb{O}_{K_{2}}.$ Taking any $(1 + 2 z_{0},2 w_{0} )\in C_{2}(K_{2})$ for some $ z_{0}, w_{0} \in
\mathbb{O}_{K_{2}},$ then they satisfy $$ w_{0}^{2}
= 8 z_{0}^{2}(1 + z_{0} )^{2} - p - 4 p z_{0}(1 + z_{0} ), $$ taking the valuation $v_{2}$ of both sides, we get $ p \equiv 3(\bmod 4 ). $
Conversely, let $ g(z, w) = 8 z^{2}(1 + z )^{2} - p - 4 p
z( 1 + z ) - w^{2}, $ by the above discussion, we know that $
C_{2} (K_{2}) \neq \emptyset \Longleftrightarrow \ g(z, w) = 0 $
has solutions in $ \mathbb{O}_{K_{2}}^{2}. $ \ If $ p \equiv 3 (\bmod 8 ),
$ then $ v_{2}(g(0, 2 + \sqrt{D})) > 2 v_{2}(g^{\prime }_{w}(0, 2
+ \sqrt{D} )); $ \ If $ p \equiv 7 (\bmod \ 8 ), $ then $
v_{2}(g(0, 1)) > 2 v_{2}(g^{\prime}_{w}(0, 1 )), $ by Hensel
lemma, $ C_{2}(K_{2} ) \neq \emptyset . $  \\
\indent ii) To prove that $C_{2}(K_{p}) \neq \emptyset. $ \ If $ p \equiv 1, 7 (\bmod \ 8 ), $ then $ w_{0}^{2}
\equiv 2 (\bmod \ p )$ for some $w_{0} \in  \mathbb{Z}$(note that $ (\frac{2} {p} )
= 1),$  we have $ v_{p}(f(0, w_{0} )) > 2
v_{p}(f^{\prime }_{w}(0, w_{0} )); $ If $ p \equiv 3, 5 (\bmod
\ 8 ), $ then $ Db^{2} \equiv 2 (\bmod p) $ for some $b \in \mathbb{Z}$( note that $ (\frac{2} {p}) = (\frac{D} {p} ) = - 1$ ), we have $ v_{p}(f(0, \sqrt{D} b)) > 2 v_{p}(f_{w}(0, \sqrt{D}
b )), $  by  Hensel lemma, $ C_{2}(K_{p}) \neq \emptyset. $ \\
\indent iii) To prove that $C_{2}(K_{q}) \neq \emptyset. $ Its proof is similar to ii). \\
Obviously, $C_{2}(K_{\infty})=C_{2}(\mathbb{C}) \neq \emptyset. $
To sum up, $ 2 \in S^{(\varphi )} (E / K ) \ \Longleftrightarrow \
p \equiv 3 (\bmod 4 ). $
\item[(2)](b) similar to (a).
\item[(1$^{\prime }$)]  follows directly from the property of
valuation.
\item[(2$^{\prime }$)] is similar to Proposition \ref{prop:mainA12}
(2)(3).\end{itemize}\end{proof}

\subsection{Case C \ $ K = \mathbb{Q} (\sqrt{-2}) $ }

We denote the condition: "$ K = \mathbb{Q} (\sqrt{-2}) $ both $ p $ and $ q $ are inertia in $
K $ " by condition (C). In this section, we always assume that condition (C) holds.\\
\indent By ramification theory, condition (C) holds if and only if
  $ \left( \frac{- 2}{p}\right) = \left( \frac{-
2}{q}\right) = - 1, $ which is also equivalent to that \ $ p \equiv 5
(\bmod \ 8 ). $ Note that $ 2 $ is totally ramifid in $ K, $
denote $ \pi _{2} =  \sqrt{ - 2}.$  Under the above assumption and notation, here $ S = \{ \infty , \pi
_{2} , p, q \}$ and $ K( S, 2) = <-1, \pi _{2}, p ,  q > . $ The
completions $ K_{v} $ of $ K $
at $ v \in S $ are given respectively by
$$ K_{\infty } = \mathbb{C}, \ K_{\pi _{2} } \cong \mathbb{Q}_{2 } (\sqrt{-2}), \
K_{p} = \mathbb{Q}_{p} (\sqrt{-2}), \ K_{q} = \mathbb{Q}_{q} (\sqrt{-2}). $$
Note that $ 4 = \pi _{2}^{4}, $ for each $ d \in K( S, 2), $ the
corresponding homogenous space can be transformed by variable
transformation $ z \mapsto \frac{z}{\pi _{2}} $ to the following form $$
 C_{d} :  \ d w^{2} = d^{2} + \varepsilon ( p + q) d z^{2} + z^{4}.
$$

\indent For  $ E
= E_{\varepsilon },E^{\prime }=E^{\prime }_{\varepsilon } $ with $ \varepsilon = \pm 1 $  be as in \ref{equation:basic1} and \ref{equation:basic2}, we have the following results:
\begin{prop}\begin{itemize}
\item[(1)] For $ d \in K(S, 2), $ if one of the following conditions holds: \\
(a) \ $ p \mid d; $ \quad  (b) \ $ q \mid d; $ \quad (c) \ $ \pi
_{2} \mid d. $  \quad Then $ d \notin S^{(\varphi )} (E / K ). $
\item[(2)] If $ \varepsilon = 1, $ then $ - 1 \notin S^{(\varphi )} (E
/ K ); $ \ If $ \varepsilon = -1, $ then $ - 1 \in S^{(\varphi )}
(E / K ). $
\item[(1$^{\prime }$)]  For $ d \in K(S, 2), $ if  $ \pi _{2} \mid d, $ Then $ d
\notin S^{(\widehat{\varphi } )} (E^{\prime } / K ). $
\item[(2$^{\prime }$)] $ - 1, p, \ q \in S^{(\widehat{\varphi } )} (E^{\prime } / K
). $ \end{itemize}\end{prop}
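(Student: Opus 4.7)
The plan is to follow the two-step template from Cases A and B: rule out Selmer memberships by $v$-adic valuation, and confirm the remaining ones either by exhibiting an obvious $K$-rational point or by checking local solvability at each $v\in S$ via Hensel's lemma applied to a well-chosen seed.

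Parts~(1) and~(1') I expect to handle by pure valuation, directly analogous to Propositions~\ref{prop:mod1}(1) and~\ref{prop:mainA12}(1). For the transformed $C_d: dw^2 = d^2 + \varepsilon(p+q)dz^2 + z^4$, the cases $p\mid d$ and $q\mid d$ should follow by balancing $v_p$ (resp.\ $v_q$) of the three terms against the admissible parities of $v_p(z), v_p(w)$ and deriving a contradiction; the case $\pi_2\mid d$ uses only $v_{\pi_2}(2)=2$ and that the residue field is $\mathbb{F}_2$. Part~(1') is identical for $C'_d$.

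For part~(2), I would first note that the seed $(0,\sqrt{-1})$ lifts by Hensel at both $v=p$ and $v=q$ for either sign of $\varepsilon$: from $p\equiv 5\pmod{8}$ one has $\sqrt{-1}\in\mathbb{Q}_p\subset K_p$, and from $q\equiv 7\pmod{8}$ one has $\sqrt{2}\in\mathbb{Q}_q$, hence $\sqrt{-1}=\sqrt{-2}/\sqrt{2}\in K_q$. The sign dichotomy therefore lives entirely at $v=\pi_2$: for $\varepsilon=-1$ I would produce a Hensel seed $(z_0,w_0)\in\mathcal{O}_{K_{\pi_2}}^2$ satisfying $v_{\pi_2}(f(z_0,w_0)) > 2v_{\pi_2}(f_w(z_0,w_0))$, whereas for $\varepsilon=+1$ I would instead rule out $C_{-1}(K_{\pi_2})$ by a case split on $v_{\pi_2}(z)$, together with an analysis of which residues the right-hand side $-1-(p+q)z^2-z^4$ can take modulo powers of $\pi_2$ (using $v_{\pi_2}(p+q)=4$, which follows from $p\equiv 5\pmod{8}$). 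Part~(2') I would reduce to proving $-1\in S^{(\widehat{\varphi})}(E'/K)$: the point $(1,0)$ lies on $C'_{-p}(K)$ and $C'_{-q}(K)$ when $\varepsilon=+1$, and on $C'_p(K)$ and $C'_q(K)$ when $\varepsilon=-1$, so together with $-1\in S^{(\widehat{\varphi})}$ the group structure yields all three claimed elements $-1,p,q$. Placing $-1$ in $S^{(\widehat{\varphi})}$ then requires the same Hensel recipe applied to $C'_{-1}$ at each $v\in S$.

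The main obstacle throughout will be the $\pi_2$-adic analysis, because $\pi_2=\sqrt{-2}$ is totally ramified over $\mathbb{Q}_2$ with $\pi_2^2=-2$ and $\pi_2^4=4$. Tracking squares of units in $\mathcal{O}_{K_{\pi_2}}^*$ modulo $\pi_2^k$ for small $k$ requires the same careful expansion used in the $C_{-2}(\mathbb{Q}_2)$ computation of Proposition~\ref{prop:mod1}(2)(b), but now in a ramified quadratic extension; this bookkeeping is what governs both the non-solvability for $\varepsilon=+1$ in part~(2) and the existence of the Hensel seeds needed for $\varepsilon=-1$ and throughout part~(2').
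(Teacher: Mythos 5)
Your plan follows the paper's proof essentially verbatim: valuation arguments for (1) and (1$'$), the exact point $(0,\sqrt{-1})$ (equivalently a Hensel lift from an integral approximation) at the places above $p$ and $q$, an exhaustive $\pi_2$-adic valuation table to show $C_{-1}(K_{\pi_2})=\emptyset$ when $\varepsilon=+1$, a Hensel seed at $\pi_2$ when $\varepsilon=-1$ (the paper uses $(1+\pi_2,\ \pi_2+\pi_2^2)$), and for (2$'$) the rational point $(1,0)$ on $C'_{-\varepsilon p}$ and $C'_{-\varepsilon q}$ combined with $-1\in S^{(\widehat{\varphi})}(E'/K)$ and the group structure of $K(S,2)$. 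The only slip is a sign typo (for $\varepsilon=+1$ the relevant right-hand side is $-1+(p+q)z^2-z^4$, not $-1-(p+q)z^2-z^4$), which is harmless since, as you note, $v_{\pi_2}(p+q)=4$.
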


\begin{proof}\begin{itemize}
\item[(1)]  follows directly by
the valuation property.
\item[(2)]($ \varepsilon = 1 $) Let $
f(z, w ) = w^{2} + 1 - (p + q ) z^{2} + z^{4}. $\\
To prove that $ - 1 \notin S^{(\varphi )} (E
/ K ),$ we only need to prove that $ C_{-1}(K_{\pi _{2}}) = \emptyset. $ If not, note that  $ C_{-1}(K_{\pi _{2}}) \neq \emptyset $ implies $C_{-1}(\mathbb{O}_{K_{\pi _{2}}}) \neq \emptyset. $ But taking any $ (z,w )
\in \mathbb{O}_{K_{\pi _{2}}}^{2}, $ by explicit calculation, we get
$$ v_{\pi _{2}}( f(z, w) ) = \left \{
      \begin{array}{rl} 0, & \text{if} \ z, w
\in \mathbb{O}_{K_{\pi _{2}}}^{*} \text{or} \  z, w \in
\pi _{2} \mathbb{O}_{K_{\pi _{2}}},  \\
2,  & \text{if} \ v_{\pi _{2}}(w - 1 ) \geq 2 \
\text{and} \ z \in \pi _{2} \mathbb{O}_{K_{\pi _{2}}} \ \text{or} \
v_{\pi _{2}}( w ) \geq 2 \ \text{and} \ z \in \mathbb{O}_{K_{\pi _{2}}} ^{*},  \\
3, & \text{if} \ v_{\pi _{2}}(w - 1) = 1 \ \text{and} \
z \in \pi _{2} \mathbb{O}_{K_{\pi _{2}}}, \\
4, & \text{if} \ v_{\pi _{2}}(w - \pi _{2}) \geq 3 \
\text{and} \ z \in \mathbb{O}_{K_{\pi _{2}}}^{*}, \\
5, & \text{if} \ v_{\pi _{2}}(w - \pi _{2} ) = 2 \
\text{and} \ v_{\pi _{2}}( z - 1) \geq 2, \\
6, & \text{if} \ v_{\pi _{2}}(w - \pi _{2}) = 2 \
\text{and} \ v_{\pi _{2}}(z - 1) = 1.
\end{array}
    \right.$$

which implies $ f(z, w )=0 $ has no solutions in $
\mathbb{O}_{K_{\pi_{2}}}^{2},$ this is a contradict.
\item[(2)]($ \varepsilon = -1. $) Let $ f(z, w) =
w^{2} + 1 + (p + q) z^{2} + z^{4}. $ \\
\indent i) To prove that $
C_{-1}(K_{p} )
\neq \emptyset. $  Since $ p \equiv 5 (\bmod \ 8 ),$ then $ a^{2} \equiv -1 (\bmod \ p )$ for some $ a \in
\mathbb{Z}. $ By $
v_{p}(f(0, a ))
> 2 v_{p}(f^{\prime }_{w}(0, a ))$ and Hensel lemma, $
C_{-1}(K_{p} )
\neq \emptyset. $ \\
\indent ii)To prove that $
C_{-1}(K_{q} )
\neq \emptyset. $  Since $ q \equiv 7 (\bmod \ 8 ),$ then $ 2 b^{2} \equiv 1 (\bmod \
q ) $ for some $ b
 \in \mathbb{Z}. $ By $ v_{q}(f(0, \sqrt{-2} b )) >
 2 v_{q}(f_{w}(0, \sqrt{-2} b )) $ and Hensel lemma, $ C_{-1}(K_{q})
\neq \emptyset . $ \\
\indent iii) To prove that $
C_{-1}(K_{\pi _{2}} )
\neq \emptyset. $ By $ v_{\pi _{2}}(f(1
 + \pi _{2}, \pi _{2} + \pi _{2}^{2} )) > 2 v_{\pi _{2}}(f^{\prime}_{w}
(1 + \pi _{2}, \pi _{2} + \pi _{2}^{2} )) $ and  Hensel lemma, $
C_{-1}(K_{\pi _{2}}) \neq \emptyset . $ \\
Obviously, $C_{-1}(K_{\infty})=C_{2}(\mathbb{C}) \neq \emptyset. $
To sum up, $ -1 \in
S^{(\varphi )}(E / K ). $\\
\indent (1$^{\prime }$)  follows directly by
the valuation property.
\item[(2$^{\prime }$)] is similar to (2). \end{itemize}\end{proof}
\par  \vskip 0.2 cm

\subsection{ Case D \ $ K = \mathbb{Q} (\sqrt{-1}) $ }

We denote the condition: "$ K = \mathbb{Q} (\sqrt{-1}) $" by condition (D).
In this condition, $ 2 $ totally ramifies in $ K, $ denote $ \pi _{2}
= 1 - i, $ where $ i = \sqrt{-1}. $ Note that $ p $
and $ q $ can't be simultaneously inertia in $ K $ ( $ q - 2 =
p ).$ So we discuss the following two cases according to $p (\bmod)$:

\subsubsection{ Case D$_{1} $ }\label{case:D1}  Assume that $ p \equiv 1 (\bmod \ 4 ), $
\ then $ p $ splits completely in $ K. $ Denote $ p = \mu \cdot
\bar{\mu } $, where $ \mu , \bar{\mu } \in \mathbb{Z} [\sqrt{-1}] $ are two
conjugate irreducible elements. Obviously, $ q $ is inertia in $ K. $
In this case, \ $ S = \{ \infty , \pi _{2} , \mu , \bar{\mu }, q \}
$ and $ K( S, 2) = <i,  \pi _{2}, \mu , \bar{\mu }, q >. $ The
completions $ K_{v} $ at
$ v \in S $ are given respectively by $$
 K_{\infty } = \mathbb{C}, \ K_{\pi _{2} } \cong \mathbb{Q}_{2 } (\sqrt{-1}), \
K_{\mu } \cong K_{\bar{\mu }} \cong \mathbb{Q}_{p}, \ K_{q} = \mathbb{Q}_{q}
(\sqrt{-1}). $$ Note that $ 2 = i \cdot \pi _{2}^{2}, \ 4 = -\pi
_{2}^{4}, $ by variable transformations $ z \mapsto \pi _{2} z $ and
$ z \mapsto i z, $ for any $ d \in K( S, 2), $ the corresponding
homogenous spaces can be given
respectively by
$$ C_{d}: \ d w^{2} = d^{2} - \varepsilon ( p + q ) i d z^{2} -
z^{4}, $$
$$ C_{d}^{\prime } : \ d w^{2} = d^{2} - \varepsilon ( p + q ) d
z^{2} + p q z^{4}. $$
\par  \vskip 0.1 cm
\indent For  $ E
= E_{\varepsilon },E^{\prime }=E^{\prime }_{\varepsilon } $ with $ \varepsilon = \pm 1 $  be as in \ref{equation:basic1} and \ref{equation:basic2}, we have the following results:
\begin{prop}\begin{itemize}
\item[(1)]  For $ d \in K(S, 2), $ if one of the following conditions holds: \\
(a) \ $ \mu \mid d; $ \quad  (b) \ $ \bar{\mu } \mid d; $ \quad  (c)
\ $ q \mid d; $ \quad (d) \ $ \pi _{2} \mid d. $ \ Then $ d \notin
S^{(\varphi )} (E / K ). $
\item[(2)] $ i \in S^{(\varphi )} (E / K ) \
 \Longleftrightarrow \ p \equiv 1 (\bmod \ 8 ). $ \end{itemize}\end{prop}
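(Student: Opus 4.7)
The plan for part~(1) is a place-by-place parity-of-valuation argument on the homogeneous space
\[
C_d:\quad dw^2 = d^2 - \varepsilon(p+q)idz^2 - z^4,
\]
following the style of Propositions~2.1(1) and 2.3(1). Under the hypothesis $v\mid d$ one has $\text{ord}_v(d)=1$, so $\text{ord}_v(dw^2)$ is odd while $\text{ord}_v(d^2)$ and $\text{ord}_v(z^4)$ are even. At $v\in\{\mu,\bar{\mu},q\}$ one has $\text{ord}_v(p+q)=0$, so the middle term $-\varepsilon(p+q)idz^2$ also has odd valuation; case analysis on $\text{ord}_v(z)>0$, $=0$, or $<0$ shows the three $v$-adic valuations on the RHS are pairwise distinct with an even minimum in every case, contradicting the odd LHS. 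At $v=\pi_2$ one uses $\text{ord}_{\pi_2}(p+q)=\text{ord}_{\pi_2}(2(p+1))\geq 4$, which pushes the middle term strictly above the other two in every subcase, again yielding an even-valuation RHS against an odd-valuation LHS.

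For part~(2), taking $d=i$ and multiplying by $-i$, the homogeneous space becomes
\[
C_i:\quad w^2 = i\bigl(1-\varepsilon(p+q)z^2 + z^4\bigr).
\]
Local solvability is immediate at $v=\infty$ (where $K_\infty=\mathbb{C}$) and at $v=q$ (where $K_q=\mathbb{Q}_q(i)$ is unramified and $8\mid q^2-1$, so $i$ is a square in $\mathbb{F}_{q^2}$ and $(z,w)=(0,\sqrt{i})$ lifts by Hensel). The mod-8 dichotomy therefore must come from $v=\mu,\bar{\mu},\pi_2$.

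At $v=\mu,\bar{\mu}$ (where $K_v=\mathbb{Q}_p$ and $i\in\mathbb{Q}_p$ since $p\equiv 1\pmod 4$), I would exploit the identity $z^4-\varepsilon(p+q)z^2+1 \equiv (z^2-\varepsilon)^2\pmod p$ that follows from $p+q\equiv 2\pmod p$. Case analysis then shows that when $z^2\equiv\varepsilon\pmod p$, the quartic has $v_p=1$ (odd), ruling out solutions; and when $z^2\not\equiv\varepsilon\pmod p$ (or $v_p(z)\neq 0$), the quartic lies in $(\mathbb{Q}_p^*)^2\cdot(1+p\mathbb{Z}_p)\subseteq(\mathbb{Q}_p^*)^2$, so $C_i(\mathbb{Q}_p)\neq\emptyset$ iff $i\in(\mathbb{Q}_p^*)^2$, i.e., iff $p\equiv 1\pmod 8$. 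At $v=\pi_2$ with $K_{\pi_2}\cong\mathbb{Q}_2(i)$, the natural Hensel seed is $(z,w)=(1,\pi_2 w_0)$; since $2-\varepsilon(p+q) = -2\varepsilon p$ and $\pi_2^2=-2i$, the equation reduces to $w_0^2=\varepsilon p$, and solvability is controlled by whether $p$ is a square in $\mathbb{Q}_2(i)$. A comparison of the $\pi_2$-adic filtration with $1+8\mathbb{Z}_2\subseteq(\mathbb{Q}_2^*)^2$ shows this happens exactly when $p\equiv 1\pmod 8$. For necessity, no alternative choice of $z$ can rescue the situation: a $\pi_2$-adic expansion using $\pi_2^2=-2i$ and $\text{ord}_{\pi_2}(p+q)=4$ forces the same square-class condition on $p$.

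The main obstacle is the $\pi_2$-adic step. Because $\mathbb{Q}_2(i)/\mathbb{Q}_2$ is ramified with uniformizer $\pi_2=1-i$, the square-class structure of $K_{\pi_2}^*$ is richer than at the unramified 2-adic places of Sections~2.1--2.3, and the valuation expansion for the necessity direction must be carried two or three steps further before the precise congruence $p\equiv 1\pmod 8$ (as opposed to only $p\equiv 1\pmod 4$) drops out cleanly. The calculations at $\pi_2$ in Proposition~2.1(2)(b),(3)(a) give the closest template for both the seed construction and the necessity bookkeeping.
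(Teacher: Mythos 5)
The paper never actually proves this proposition (the proof environment in Case D$_{1}$ covers only the companion statement for $E'$), so the benchmark is the paper's general method, which your plan does follow: parity-of-valuation for part (1), explicit local analysis plus Hensel for part (2). Part (1) is essentially right, though your phrase ``pushes the middle term strictly above the other two'' is not literally true at $\pi_{2}$ (e.g.\ for $\mathrm{ord}_{\pi_2}(z)=-2$ the middle term has valuation $1<2$); what actually closes the argument is that the minimum of the three valuations is attained at a single term and is even, against an odd left-hand side. In part (2) the computation at $\mu,\bar{\mu}$ via $1-\varepsilon(p+q)z^{2}+z^{4}\equiv(z^{2}-\varepsilon)^{2}\pmod{p}$ is correct and is the real content: it alone forces $p\equiv 1\pmod 8$ whenever $C_{i}(K_{\mu})\neq\emptyset$, so the laborious ``no alternative $z$ can rescue the situation'' bookkeeping at $\pi_{2}$ is unnecessary for the necessity direction.

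The genuine gap is in the sufficiency step at $\pi_{2}$. Your identity $2-\varepsilon(p+q)=-2\varepsilon p$ holds only for $\varepsilon=+1$: since $p+q=2(p+1)$, one has $2-\varepsilon(p+q)=2\bigl(1-\varepsilon(p+1)\bigr)$, which equals $-2p$ when $\varepsilon=1$ but equals $2(p+2)$ when $\varepsilon=-1$. In the latter case the seed $z=1$ gives $w^{2}=2i(p+2)=-\pi_{2}^{2}(p+2)$, so you would need $-(p+2)$ to be a square in $\mathbb{Q}_{2}(i)$; but a rational $2$-adic unit $u$ is a square in $\mathbb{Q}_{2}(i)=\mathbb{Q}_{2}(\sqrt{-1})$ iff $u\equiv\pm 1\pmod 8$, and $-(p+2)\equiv 5\pmod 8$ when $p\equiv 1\pmod 8$. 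So your Hensel seed fails outright for $E_{-}$, and the claim ``the equation reduces to $w_{0}^{2}=\varepsilon p$'' is false for $\varepsilon=-1$. The repair is easy: choose $z$ with $z^{2}=\varepsilon$ (i.e.\ $z=i$ when $\varepsilon=-1$), which gives $1-\varepsilon(p+q)z^{2}+z^{4}=2-(p+q)=-2p$ and hence $w^{2}=-2pi=\pi_{2}^{2}p$ uniformly in $\varepsilon$; solvability then reduces to $p\in K_{\pi_{2}}^{\ast 2}$, which holds iff $p\equiv\pm 1\pmod 8$, i.e.\ iff $p\equiv 1\pmod 8$ given that $p\equiv 1\pmod 4$ in Case D$_{1}$. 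With that substitution the argument goes through.
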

\begin{prop} \begin{itemize}
\item[(1)] \ For $ d \in K(S, 2), $ if one of the following conditions
holds: \\
(a) \ $ \pi _{2} \mid d; $ \quad  (b) \ $ d = i. $ \quad Then $ d
\notin S^{(\widehat{\varphi } )} (E^{\prime } / K ). $
\item[(2)] $ p, \ q \in S^{(\widehat{\varphi } )} (E^{\prime } / K ). $
\item[(3)] (a) \ $ \mu \in S^{(\widehat{\varphi } )} (E^{\prime } / K ) \
\Longleftrightarrow $ the imaginary part
$ \Im \mu \equiv 0 (\bmod \ 4 ); $ \\
(b) \ $ \bar{\mu } \in S^{(\widehat{\varphi } )} (E^{\prime } / K )
\ \Longleftrightarrow $ the imaginary part
$ \Im \bar{\mu } \equiv 0 (\bmod \ 4 ); $ \\
(c) \ $ i \mu \in S^{(\widehat{\varphi } )} (E^{\prime } / K ) \
\Longleftrightarrow $ the real part $ \Re \mu \equiv 0 (\bmod \ 4 ); $ \\
(d) \ $ i \bar{\mu } \in S^{(\widehat{\varphi } )} (E^{\prime } / K
) \ \Longleftrightarrow $ the real part of $ \Re \bar{\mu } \equiv 0
(\bmod \ 4 ). $ \end{itemize}\end{prop}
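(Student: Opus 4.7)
The plan is to run the descent algorithm of \cite{Silverman} Chapter X: for each specified $d \in K(S,2)$ one checks whether the homogeneous space
\[
C'_d:\ dw^2 = d^2 - \varepsilon(p+q)\,d z^2 + pq\,z^4
\]
has a $K_v$-point at every place $v \in S = \{\infty, \pi_2, \mu, \bar\mu, q\}$. Three of these checks are essentially automatic: $K_\infty = \mathbb{C}$ gives archimedean solvability for free, and at each odd prime $\mu$, $\bar\mu$, $q$ the residue field is large enough that a quadratic-residue computation plus Hensel's lemma yields a local point in exactly the style of the proofs of Propositions \ref{prop:mainA12} and \ref{prop:mainA22}. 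All the real work therefore sits at the ramified dyadic place $v = \pi_2$, where $K_{\pi_2} \cong \mathbb{Q}_2(i)$ with uniformizer $\pi_2 = 1 - i$ satisfying $\pi_2^2 = -2i$.

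For part (1)(a) the argument is a pure $\pi_2$-adic parity count. Since $p + q = 2(p+1)$ and $p \equiv 1 \pmod 4$, one has $v_{\pi_2}(p+q) \ge 4$ and $v_{\pi_2}(pq) = 0$; when $v_{\pi_2}(d) = 1$, the three terms on the right of $C'_d$ have $\pi_2$-valuations $2$, $\ge 5 + 2v_{\pi_2}(z)$, and $4v_{\pi_2}(z)$ respectively, all even, and a short case split on the sign of $v_{\pi_2}(z)$ (to isolate the dominant term and rule out cancellation) shows $v_{\pi_2}$ of the right-hand side is even, contradicting the odd valuation $1 + 2v_{\pi_2}(w)$ of the left. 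Part (1)(b) takes $d = i$ and, after multiplying $C'_i$ by $-i$, asks whether $w^2 = i + \varepsilon(p+q)z^2 - ipq z^4$ has a solution in $K_{\pi_2}$; the congruence mod $\pi_2^2$ forces $z$ to be a unit and $v_{\pi_2}(w) \ge 1$, and then, setting $w = \pi_2 w_1$, the congruence mod $\pi_2^4$ reduces to $w_1^2 \equiv 1 + 2i\varepsilon m \pmod 4$ with $m = (p+1)/2$ odd, whose insolvability in $\mathbb{Z}_2[i]$ follows from the fact that any square $(x+yi)^2$ whose imaginary part is twice an odd number requires $xy$ odd, forcing $x^2 - y^2 \equiv 0 \pmod 8$ and hence a real part $\not\equiv 1 \pmod 4$.

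Part (2) is inherited from the $\mathbb{Q}$-level statement Proposition~2.2 of \cite{Qiu1}: $p$ and $q$ already sit in $S^{(\widehat{\varphi})}(E'/\mathbb{Q})$ for the corresponding sign convention, and local solvability of $C'_p$ and $C'_q$ over each $\mathbb{Q}_\ell$ carries over to each completion $K_v$ with $v\mid\ell$ because $\mathbb{Q}_\ell \subseteq K_v$; the archimedean place causes no trouble since $K_\infty = \mathbb{C}$ enlarges $\mathbb{R}$.

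The main obstacle is part (3). I would prove the statement for $d = \mu$ in detail and deduce $\bar\mu$, $i\mu$, $i\bar\mu$ by complex conjugation (swapping $\mu$ and $\bar\mu$) and by the substitution $\mu\mapsto i\mu$ (which, inside the square class at $\pi_2$, interchanges the real and imaginary parts of $\mu$). After dividing $C'_\mu$ by the $\pi_2$-adic unit $\mu$, one must solve $w^2 = \mu - \varepsilon(p+q)z^2 + q\bar\mu\,z^4$ in $K_{\pi_2}$; writing $\mu = a + bi$ with $a$ odd and $b$ even (so $p = a^2 + b^2 \equiv 1\pmod 4$), expanding at $z$ equal to a suitable unit, and tracking terms through order $\pi_2^6$, the Hensel-lemma hypothesis for a lift becomes a mod-$4$ congruence on $b$ that is satisfied precisely when $4 \mid b = \Im\mu$. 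Conversely, when $b \equiv 2 \pmod 4$, a case split on $v_{\pi_2}(z)$ (including $v_{\pi_2}(z)\neq 0$) shows no solution exists anywhere in $K_{\pi_2}$; this last step, ruling out all possible $z$-profiles simultaneously, is the principal technical difficulty of the entire proposition.
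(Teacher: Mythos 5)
Your proposal matches the paper's proof in both structure and substance: everything is reduced to the ramified dyadic place $\pi_2$ (the odd places being dispatched by Hensel's lemma --- the paper cites Lemma 14 of \cite{Merriman} for these), with a valuation-parity exclusion for (1)(a), a mod-$\pi_2^4$ square-class obstruction for (1)(b) (the paper instead tabulates $v_{\pi_2}(f)$ directly, which amounts to the same computation), inheritance from the rational/torsion points for (2), and for (3) a case split on $v_{\pi_2}(z)$ to rule out points when $\Im\mu\equiv 2\ (\bmod\ 4)$ together with explicit Hensel lifts indexed by $(\Im\mu,\Re\mu)\ (\bmod\ 8)$ otherwise, exactly as in the paper's two valuation tables \eqref{equation:D11}--\eqref{equation:D12} and its list of lifting points. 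One cosmetic slip: the middle term $\varepsilon(p+q)dz^2$ in (1)(a) has odd valuation $5+2v_{\pi_2}(z)$, not even, but as you observe it is never the dominant term, so the parity contradiction stands.
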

\par  \vskip 0.1 cm
\begin{proof}  We only prove (1) (b)
and (3) (a), the others are similar.
\begin{itemize} \item[(1)](b) \ Let $ f(z, w ) =  - i w^{2} - 1 - (p +
q) i z^{2} + p q z^{4}. $
Wantting to  prove $ i
\notin S^{(\widehat{\varphi } )} (E^{\prime } / K ),$ we only need to prove that $ C_{i}^{'}(K_{\pi _{2}}) = \emptyset. $ Taking any $(z, w ) \in K_{\pi _{2}}^{2},
$
by computation, we get \begin{equation}v_{\pi _{2}}(f(z,w))= \left\{
      \begin{array}{rl}
2, \hspace{2mm} & \text{ if } v_{\pi _{2}}(w) < 0; \\
 1, \hspace{2mm} & \text{ if } v_{\pi _{2}}(w) = 0; \\
3, \hspace{2mm} & \text{ if } v_{\pi _{2}}(w) > 0,
\end{array}
\right.
\end{equation}
which implies that $f(z,w)=0$ has no solutions in $K_{\pi _{2}}^{2},$
 therefore $ C_{i}^{'}(K_{\pi _{2}}) =
\emptyset.$
\item[(3)](a) Let $ f(z, w ) = \mu ^{2} - (p + q) \mu z^{2} + p q z^{4}
- \mu w^{2}. $ \ Since $ p \equiv 1 (\bmod \ 4 ), $ then $ p =
a^{2} + b^{2} $ and $ \mu = a + b i = a + b - b \pi_{2} $ for some
$ a, b \in \mathbb{Z}. $   \\
\indent i) To prove that $ C_{\mu}^{'}(K_{\pi _{2}}) \neq \emptyset$ if and only if $ b\equiv 0 (\bmod 4).$ For necessity, it's equivalent to prove that if $b\equiv 1,2,3 (\bmod 4),$ then $ C_{\mu}^{'}(K_{\pi _{2}}) = \emptyset.$ Let $ (z, w
) \in \mathbb{O}_{K_{\pi _{2}}}^{2}, $ by explicit calculation, we
get
\begin{equation}\label{equation:D11}v_{\pi _{2}}(f(z, w )) \leq \left\{
      \begin{array}{rl}
0, \hspace{2mm} & \text{ if } \  z, w \in
 \mathbb{O}_{K_{\pi _{2}}}^{*} \  \text{ or } \ z, w \in
 \pi _{2} \mathbb{O}_{K_{\pi _{2}}}, \\
 3, \hspace{2mm} & \text{ if } \ z \in \pi_{2}\mathbb{O}_{K_{\pi_{2}}},
 w \in \mathbb{O}_{K_{\pi_{2}}}^{*}, \\
5, \hspace{2mm} & \text{ if } z \in \mathbb{O}_{K_{\pi_{2}}}^{*}, w
\in \pi _{2} \mathbb{O}_{K_{\pi _{2}}}.
\end{array}
\right.
\end{equation}
which implies that $f(z,w)=0 $ has no solutions in $\mathbb{O}_{K_{\pi _{2}}}^{2}.$ Putting $ g(z_{1},
w_{1}) = \mu z_{1}^{4} - (p + q) \mu z_{1}^{2} + p q - \mu
w_{1}^{2}, $ then $g(z_{1},w_{1})=z_{1}^{4}f(\frac{1}{z_{1}},\frac{w_{1}}{z_{1}^{2}}).$ By the above discussion, we know that $ f(z, w ) = 0 $ has
solutions in $ K_{\pi _{2}}^{2} $ if and only if  $ g(z_{1}, w_{1})=0
$ has solutions in $ \mathbb{O}_{K_{\pi _{2}}}^{2}. $ Similarly, one
can get
\begin{equation}\label{equation:D12}v_{\pi _{2}}(g(z_{1}, w_{1})) \leq \left\{
\begin{array}{rl}
0, \hspace{2mm} & \text{ if } \ z_{1}, w_{1} \in \mathbb{O}_{K_{\pi
_{2}}}^{*} \ \text{or} \  z_{1}, w_{1} \in \pi _{2} \mathbb{O}_{K_{\pi_{2}}}, \\
3, \hspace{2mm} & \text{ if } \ z_{1} \in \pi _{2} \mathbb{O}_{K_{\pi_{2}}},
w_{1} \in \mathbb{O}_{K_{\pi_{2}}}^{*}, \\
5, \hspace{2mm} & \text{ if } \ z_{1} \in \mathbb{O}_{K_{\pi
_{2}}}^{*}, w_{1} \in \pi _{2} \mathbb{O}_{K_{\pi_{2}}}.
\end{array}
    \right.
\end{equation}
which implies that $g(z_{1}, w_{1})=0$ has no solutions in $\mathbb{O}_{K_{\pi _{2}}}^{2}.$ By \ref{equation:D11} and \ref{equation:D12}, we conclude that $ C_{\mu}^{'}(K_{\pi _{2}}) = \emptyset.$
Therefore if $ C_{\mu}^{'}(K_{\pi _{2}}) \neq \emptyset, $ then the
imaginary part $ b $ of $ \mu $ must satisfy
$ b \equiv 0 (\bmod \ 4 ). $
Conversely, if $ b \equiv 0 (\bmod \ 4 ), $ then we have the
following results: \\
If $ (b, a) \equiv (4, 3 ), ( 0, 7)(\bmod \ 8), $ then $ v_{\pi
_{2}}(g(\pi _{2}, 1))
> 2 v_{\pi _{2}}(g^{\prime }_{w_{1}}(\pi _{2}, 1)); $ \\
If $ (b, a) \equiv (4, 7), ( 0, 3) (\bmod \ 8), $ then $ v_{\pi
_{2}}(g(0, 1)) > 2 v_{\pi _{2}}(g^{\prime }_{w_{1}}(0, 1)); $ \\
If $ (b, a) \equiv (4, 5 ), ( 0, 1)  (\bmod \ 8), $ then $
v_{\pi _{2}}(g( \pi _{2}, 1 + \pi _{2})) > 2 v_{\pi _{2}}
(g^{\prime }_{w_{1}}(\pi _{2}, 1 + \pi _{2})); $ \\
If $ (b, a) \equiv (4, 1 ), ( 0, 5)(\bmod \ 8), $ then $ v_{\pi
_{2}}(g( 0, 1 + \pi_{2})) > 2 v_{\pi _{2}}(g^{\prime }_{w_{1}}
(0, 1 + \pi_{2})), $ by the above results and Hensel lemma, we get $ C_{\mu}^{'}(K_{\pi_{2}}) \neq \emptyset.  $ \\
\indent ii) By lemma 14 in \cite{Merriman}, we can easily obtain $ C_{\mu }^{'}(K_{\mu })
\neq \emptyset, C_{\mu }^{'}(K_{\bar{\mu }}) \neq \emptyset, C_{\mu
}^{'}(K_{q}) \neq \emptyset. $
To sum up, we prove (3)(a). \end{itemize}\end{proof}

\subsubsection{Case D$_{2} $}\label{case:D2} Assume that $ p \equiv 3 (\bmod \ 4 ), $
\ then $ p $ is inertia in $ K, $ while $ q $  splits completely in
$ K. $ Denote $ q = \mu \cdot \bar{\mu }, $, where $ \mu , \bar{\mu } \in \mathbb{Z} [\sqrt{-1}] $ are two
conjugate irreducible elements.
In this case, \ $ S = \{
\infty , \pi _{2} , \mu , \bar{\mu }, p \} $ and $ K( S, 2) = <i,
 \pi _{2},  \mu , \bar{\mu }, p >. $ The completions $ K_{v} $ at
 $ v \in S $ are given respectively by $$
K_{\infty } = \mathbb{C}, \ K_{\pi _{2} } \cong \mathbb{Q}_{2 } (\sqrt{-1}), \
K_{\mu } \cong K_{\bar{\mu }} \cong \mathbb{Q}_{q}, \ K_{p} = \mathbb{Q}_{p}
(\sqrt{-1}). $$ Similarly as the above case D$_{1}, $ by variable
transformations, the corresponding homogenous spaces can be given
respectively by
$$ C_{d}: \ d w^{2} = d^{2} - \varepsilon ( p + q ) i d z^{2} -
z^{4}, $$
$$ C_{d}^{\prime } : \ d w^{2} = d^{2} - \varepsilon ( p + q ) d
z^{2} + p q z^{4}. $$
\par  \vskip 0.1 cm
\indent For  $ E
= E_{\varepsilon },E^{\prime }=E^{\prime }_{\varepsilon } $ with $ \varepsilon = \pm 1 $  be as in \ref{equation:basic1} and \ref{equation:basic2}, we have the following results:
\begin{prop}\begin{itemize}
\item[(1)] For $ d \in K(S, 2), $ if one of the following conditions holds: \\
(a) \ $ \mu  \mid d; $ \quad  (b) \ $ \bar{\mu } \mid d; $ \quad
(c) \ $ p \mid d; $ \quad (d) \ $ \pi _{2} \mid d. $ \ Then $ d
\notin S^{(\varphi )} (E / K ). $\\
\item[(2)] $ i \in S^{(\varphi )} (E / K ) \
 \Longleftrightarrow \ p \equiv 7 (\bmod \ 8 ). $\end{itemize}\end{prop}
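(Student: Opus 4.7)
The plan is to use the standard identification
\[
S^{(\varphi)}(E/K) \cong \{d \in K(S,2) : C_d(K_v) \neq \emptyset \text{ for every } v \in S\}
\]
with $S = \{\infty, \pi_2, \mu, \bar{\mu}, p\}$ in Case D$_2$, and work through each local test for the two classes of $d$ named in the statement. Since $K_\infty = \mathbb{C}$, only the four non-archimedean places have to be checked.

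For part (1), I would argue by $v$-adic valuation exactly in the style of Propositions 2.1, 2.4, and 2.7. If $\mu \mid d$ (or $\bar{\mu} \mid d$, or $p \mid d$), I work at the corresponding completion and observe that $v(d w^2)$ must be odd while the valuation of $d^2 - \varepsilon(p+q) i d z^2 - z^4$ is forced by the dominant term to be even, giving a contradiction. The case $\pi_2 \mid d$ is handled similarly at $K_{\pi_2}$, but requires splitting on $v_{\pi_2}(z)$ and using that $v_{\pi_2}(2) = 2$ since $\pi_2 = 1-i$ is a ramified uniformizer.

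For part (2), I test $C_i : i w^2 = -1 + \varepsilon(p+q) z^2 - z^4$ at each place. At $v = \mu, \bar{\mu}$, which identify with $\mathbb{Q}_q$, and at $v = p$, where $K_p = \mathbb{Q}_p(i)$ is unramified of degree two, a standard Hensel's-lemma argument---mirroring the analogous steps in Propositions 2.1(2)--(3) and 2.4---produces the required local points without any restriction on $p \bmod 8$. These three places therefore impose no condition.

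The main obstacle is the place $\pi_2$, where $K_{\pi_2} = \mathbb{Q}_2(i)$ is a totally ramified quadratic extension with uniformizer $\pi_2 = 1-i$ and $\pi_2^2 = -2i$. For necessity, I would take $(z, w) \in C_i(K_{\pi_2})$ and first replace it by an integral representative via the birational substitution $(z, w) \mapsto (1/z, w/z^2)$; the $v_{\pi_2}$-analysis then pins down $v_{\pi_2}(z) = 0$ and a specific value of $v_{\pi_2}(w)$. Writing $z = 1 + \pi_2 z_0$ and $w = \pi_2^k w_0$ with $z_0, w_0 \in \mathbb{O}_{K_{\pi_2}}$, substituting, and expanding modulo successive powers of $\pi_2$, the leading congruence should reduce to $v_{\pi_2}(p+1) \geq 3$, equivalently $p \equiv 7 \pmod{8}$. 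For sufficiency, assuming $p \equiv 7 \pmod{8}$, I would exhibit an explicit approximate solution $(z_0, w_0)$ satisfying $v_{\pi_2}(f(z_0, w_0)) > 2 v_{\pi_2}(f'_w(z_0, w_0))$ and invoke Hensel's lemma, exactly in the spirit of the $p \equiv 31 \pmod{56}$ calculation in Proposition 2.1(3). This careful 2-adic bookkeeping in a ramified extension is the crux of the proof.
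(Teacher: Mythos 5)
Your overall strategy is the same as the paper's: part (1) by valuation at the relevant completions, and part (2) by checking local solvability of $C_i$ place by place, with the decisive congruence coming from the ramified place $\pi_2=1-i$ (exhaustive $v_{\pi_2}$-analysis for necessity, an explicit approximate solution plus Hensel for sufficiency). That structure is sound and matches the paper. However, two points need correction. First, your claim that the places $\mu,\bar\mu$ (lying over $q$) ``impose no condition on $p\bmod 8$'' is false. Over $K_\mu\cong\mathbb{Q}_q$ one has $p+q\equiv -2\pmod q$, so the equation becomes $iw^2+(z^2+1)^2=2qz^2$; reducing mod $q$ forces $w^2\equiv i(z^2+1)^2$, and a short valuation argument (splitting on $v_q(z)$ and on whether $z^2+1\equiv 0$) shows $C_i(\mathbb{Q}_q)=\emptyset$ unless $i$ is a square in $\mathbb{F}_q$, i.e.\ unless $q\equiv 1\pmod 8$, which is exactly $p\equiv 7\pmod 8$. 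The paper implicitly acknowledges this: its local points at these places are built from a solution of $2a^2\equiv 1$, which requires $2$ to be a quadratic residue and hence already assumes $p\equiv 7\pmod 8$. Your error is harmless for the biconditional --- necessity is already supplied by $\pi_2$, and under $p\equiv 7\pmod 8$ the points at $\mu,\bar\mu$ do exist --- but as written the assertion is wrong and a referee would object. (At $K_p=\mathbb{Q}_p(i)$ your claim is fine, since $8\mid p^2-1$ makes $i$ a square in $\mathbb{F}_{p^2}$.)

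Second, a normalization slip: since $\pi_2$ is a ramified uniformizer, $v_{\pi_2}=2v_2$ on $\mathbb{Q}_2$, so the target congruence at $\pi_2$ is $v_{\pi_2}(p+1)\geq 6$ (equivalently $v_2(p+1)\geq 3$), not $v_{\pi_2}(p+1)\geq 3$; the latter would only give $p\equiv 3\pmod 4$, which is automatic in Case D$_2$. With these two repairs your argument is essentially the paper's proof.
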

\begin{proof} we only prove (2).\\
(2) Let $ f(z, w ) = iw^{2} + 1 - (p + q)z^{2} +
z^{4}. $ \\
\indent i)  To prove that $ C_{i}(K_{\pi_{2}}) \neq \emptyset $ if and only if $p\equiv 7(\bmod 8).$ For necessity, we prove that if $ p \equiv 3 (\bmod \ 8 ), $ then $ C_{i}(K_{\pi_{2}}) = \emptyset. $ Note that $ C_{i}(K_{\pi_{2}}) \neq \emptyset $ implies
$ C_{i}(\mathbb{O}_{K_{\pi_{2}}}) \neq
\emptyset. $ Let $ (z, w ) \in
\mathbb{O}_{K_{\pi _{2}}}^{2}, $ by the explicit calculation, we get
\begin{equation}v_{\pi _{2}}(f(z, w)) \leq \left\{
      \begin{array}{rl}
0, \hspace{2mm} & \text{ if } \ z, w \in \mathbb{O}_{K_{\pi
_{2}}}^{\ast} \
\text{or} \ z, w \in \pi _{2}\mathbb{O}_{K_{\pi _{2}}}, \\
1, \hspace{2mm} & \text{ if } \ z
 \in \pi _{2}\mathbb{O}_{K_{\pi _{2}}}, w \in
\mathbb{O}_{K_{\pi _{2}}}^{\ast}, \\
2, \hspace{2mm} & \text{ if } \ v_{\pi _{2}}(w) > 1 , z \in
\mathbb{O}_{K_{\pi _{2}}}^{\ast}, \\
4,\hspace{2mm} & \text{ if } \  v_{\pi _{2}}(w  - \pi _{2})  \geq  3
, z \in \mathbb{O}_{K_{\pi _{2}}}^{\ast}, \\
6,\hspace{2mm} & \text{ if } \  v_{\pi _{2}}(w  - \pi _{2}  - \pi
_{2}^{2}) \geq 3 , z \in \mathbb{O}_{K_{\pi _{2}}}^{\ast}.
\end{array}
    \right.
\end{equation}
which implies that $ f(z, w )=0 $ has no solutions in $
\mathbb{O}_{K_{\pi _{2}}}^{2} $ .
 Therefore we have proved that if $ C_{i}(K_{\pi_{2}})
\neq \emptyset, $ then $ p \equiv 7 (\bmod \ 8). $
Conversely, if $ p \equiv 7 (\bmod \ 8), $ by $ v_{\pi
_{2}}(f(1, \pi _{2} + \pi _{2}^{2}))
> 2 v_{\pi _{2}}(f_{w}(1, \pi _{2} + \pi _{2}^{2}))$ and Hensel
lemma, $ C_{i}(K_{\pi _{2}}) \neq \emptyset. $ \\
\indent ii) To prove that if $p\equiv 7 (\bmod 8),$ then $ C_{i}(K_{\mu})\neq \emptyset. $ \ If $ p \equiv  7 (\bmod \ 8 ), $ then  there exists $ a \in \mathbb{Z} $ such that $ 2 a^{2} \equiv  1
(\bmod \ p). $ By $ v_{p}(f(0, a + i a ))
 > 2 v_{p}(f^{\prime }_{w}(0, a + i a )) $ and Hensel lemma, we get
$ C_{i}(K_{\mu})\neq \emptyset . $  \\
\indent iii) To prove that if $p\equiv 7 (\bmod 8),$ then $ C_{i}(K_{\bar{\mu }})\neq \emptyset. $ Its proof is similarly to ii). \\
\indent iv) To prove that if $p\equiv 7 (\bmod 8),$ then $ C_{i}(K_{q} ) \neq \emptyset. $ If $ q \equiv 1 (\bmod \ 8 ), $ then there exists $a \in \mathbb{Z} $ such that $  2 a^{2}
\equiv
 1 (\bmod \ q). $ By $ v_{p}(f(0, a + i a )) > 2
v_{p}(f^{\prime }_{w}(0, a + i a )) $ and  Hensel lemma
$ C_{i}(K_{q} ) \neq \emptyset. $ \\
Therefore we have proved $ i \in S^{(\varphi)}(E / K) \
\Longleftrightarrow \ p \equiv 7 (\bmod \ 8). $ \end{proof}
\begin{prop}
\begin{itemize}
\item[(1)]  For $ d \in K(S, 2), $ if $ \pi _{2} \mid d, $ then $ d \notin
S^{(\widehat{\varphi } )} (E^{\prime } / K ). $
\item[(2)] $ p, \ q \in S^{(\widehat{\varphi } )} (E^{\prime } / K ). $
 \item[(3)] \ $ i \in S^{(\widehat{\varphi } )} (E^{\prime } / K )
\Longleftrightarrow \ p \equiv 7 (\bmod \ 8 ). $
\item[(4)] (a) $ \mu \in S^{(\widehat{\varphi } )} (E^{\prime } / K ) \
\Longleftrightarrow \ p \equiv 7 (\bmod \ 8 ) $ \ or \
the real part $ \Re \mu  \equiv 2 (\bmod 4 ); $ \\
(b)  $ \bar{\mu } \in S^{(\widehat{\varphi } )} (E^{\prime } / K )
\ \Longleftrightarrow \ p \equiv 7 (\bmod \ 8 ) $ \ or \
the real part $ \bar{\mu } \equiv 2 (\bmod \ 4 ); $ \\
(c) $ i \mu \in S^{(\widehat{\varphi } )} (E^{\prime } / K ) \
\Longleftrightarrow \ p \equiv 7 (\bmod \ 8 ) $ \ or \
the imaginary part $ \Im \mu \equiv 2 (\bmod \ 4 ); $ \\
(d) $ i \bar{\mu } \in S^{(\widehat{\varphi } )} (E^{\prime } / K )
\ \Longleftrightarrow \ p \equiv 7 (\bmod \ 8 ) $ \ or \ the
imaginary part  $ \Im \bar{\mu } \equiv 2 (\bmod \ 4 ). $\end{itemize}\end{prop}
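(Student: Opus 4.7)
The plan is to apply the standard local solubility criterion
\[
S^{(\widehat{\varphi})}(E'/K) \cong \{d \in K(S,2) : C_d'(K_v) \neq \emptyset \text{ for all } v \in S\},
\]
testing each representative of $K(S,2) = \langle i, \pi_2, \mu, \bar\mu, p\rangle$ against each place of $S = \{\infty, \pi_2, \mu, \bar\mu, p\}$. Since $K_\infty = \mathbb{C}$ the archimedean condition is automatic, so the real work is at the four finite primes, with $\pi_2$ supplying the decisive obstructions, exactly as in the preceding propositions of this section.

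Part (1) should be immediate from $\pi_2$-adic valuation: if $\pi_2 \mid d$, the two sides of $dw^2 = d^2 - \varepsilon(p+q)dz^2 + pqz^4$ cannot match in $v_{\pi_2}$ on either the affine chart $(z,w) \in \mathbb{O}_{K_{\pi_2}}^2$ or the reciprocal chart $z \mapsto 1/z$, by the same case-by-case valuation enumeration already used for the $\widehat{\varphi}$-Selmer groups in Cases A and D$_1$. Part (2) should use the crucial observation that $-1 = i^2$ is a square in $K$, so $p \equiv -p$ and $q \equiv -q$ in $K^*/K^{*2}$; hence the rational point $(z,w) = (1,0)$ lies on $C_{\varepsilon p}'$ (respectively $C_{\varepsilon q}'$) with the appropriate sign of $\varepsilon$, and transplants to give $p, q \in S^{(\widehat{\varphi})}(E'/K)$ regardless of $\varepsilon = \pm 1$.

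For parts (3) and (4), solubility at the split primes $K_\mu \cong K_{\bar\mu} \cong \mathbb{Q}_q$ and at $K_p = \mathbb{Q}_p(i)$ should be routine via Hensel's lemma applied to explicit approximate roots drawn from residue identities such as $2a^2 \equiv 1 \pmod q$ (available when $q \equiv 1 \pmod 4$, equivalently $p \equiv 7 \pmod 8$), in the manner of the preceding proposition on $S^{(\varphi)}(E/K)$ in Case D$_2$. The genuine obstruction is at $K_{\pi_2}$: for each $d \in \{i, \mu, \bar\mu, i\mu, i\bar\mu\}$ I would enumerate the minimal $v_{\pi_2}$ of the defining polynomial $f(z,w)$ on the four residue patterns $(z,w) \in \{\mathbb{O}^*, \pi_2\mathbb{O}\}^2$ plus the reciprocal chart, following the valuation-table method of the Case D$_1$ analysis. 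This rules out $C_d'(K_{\pi_2}) \neq \emptyset$ whenever the stated congruence on $p$ (for part (3)) or on $(\Re\mu, \Im\mu) \pmod 4$ (for part (4)) fails; the reverse direction is supplied by Hensel-lifting explicit seeds, with up to eight sub-cases per $d$ in part (4) indexed by $(\Re\mu, \Im\mu) \pmod 8$.

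The main obstacle will be the completeness of the $K_{\pi_2}$-analysis in part (4). The conditions involve $\Re\mu, \Im\mu \pmod 4$ rather than the $\pmod 4$ threshold of Case D$_1$, because here $\mu$ lies above $q = p + 2$ rather than above $p$; the quartic coefficient $pq$ in $C_d'$ (versus the $-1$ that appears in $C_d$ of Case D$_1$) shifts the valuation tables just enough that the cutoff becomes ``$\equiv 2 \pmod 4$'' instead of ``$\equiv 0 \pmod 4$.'' Verifying each of the eight Hensel seeds works in its intended sub-case, and that no witness exists outside it, is delicate bookkeeping but relies only on Hensel's lemma and careful $2$-adic valuation, with no new technique beyond what is already deployed in Cases A through D$_1$.
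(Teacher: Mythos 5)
Your proposal follows essentially the same route as the paper: the rational point $(1,0)$ on $C_{\varepsilon p}'$, $C_{\varepsilon q}'$ combined with $-1\in K^{*2}$ for part (2), valuation tables on both affine charts at $\pi_2$ for the necessity directions of (3) and (4), and Hensel lifting of explicit seeds (indexed by $(\Re\mu,\Im\mu)$ modulo $8$) for sufficiency, with solubility at the odd places being routine. The only cosmetic difference is that the paper dispatches the places above $p$ and $q$ by citing Lemma 14 of Merriman--Siksek--Smart rather than by explicit Hensel seeds, and your heuristic for why the cutoff shifts to $\Re\mu\equiv 2\ (\bmod\ 4)$ (attributing it to the quartic coefficient) is not quite the real reason (the coefficient is $pq$ in both Cases D$_1$ and D$_2$; what changes is that $\mu$ now lies over $q$ with $p\equiv 3\ (\bmod\ 4)$), but this does not affect the validity of the plan.
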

\begin{proof} We only prove (3) and (4)(a).
\begin{itemize}
\item[(3)]  \ Let $ f(z, w ) =  - i w^{2} - 1 - (p + q) i
z^{2} + p q z^{4}. $ \\
 \indent i)  To prove that $ C_{i}^{'}(K_{\pi_{2}}) \neq \emptyset $ if and only if $ p\equiv 7(\bmod 8).$ For necessity, we prove that if $ p \equiv 3 (\bmod \ 8 ), $ then $ C_{i}^{'}(K_{\pi_{2}}) = \emptyset. $
 Let $ (z, w ) \in
\mathbb{O}_{K_{\pi _{2}}}^{2}, $ by explicitly calculating, we get
 \begin{equation}v_{\pi _{2}}(f(z, w
)) \leq \left\{
      \begin{array}{rl}
0, \hspace{2mm} & \text{ if } \ z, w \in \mathbb{O}_{K_{\pi
_{2}}}^{\ast} \ \text{or} \ z, w \in \pi _{2}\mathbb{O}_{K_{\pi _{2}}}, \\
1, \hspace{2mm} & \text{ if } \ z
 \in \pi _{2}\mathbb{O}_{K_{\pi _{2}}}, w \in
\mathbb{O}_{K_{\pi _{2}}}^{\ast }, \\
2, \hspace{2mm} & \text{ if } \ v_{\pi _{2}}(w) > 1, \ z \in
\mathbb{O}_{K_{\pi _{2}}} ^{\ast}, \\
4, \hspace{2mm} & \text{ if } \ v_{\pi _{2}}(w - \pi _{2}) > 2, \
 z \in \mathbb{O}_{K_{\pi _{2}}} ^{\ast}, \\
6, \hspace{2mm} & \text{ if } \ v_{\pi_{2}}(w - \pi _{2} - \pi
_{2}^{2})  >  2 , \ z \in \mathbb{O}_{K_{\pi _{2}}}^{\ast }.
\end{array}
    \right.
\end{equation}
which implies that $ f(z, w )=0 $ has no solutions in $
\mathbb{O}_{K_{\pi _{2}}}^{2}. $  Thus if
$ ( z, w ) \in C_{i}^{'}(K_{\pi_{2}}), $ then $ z = \pi_{2}^{ - r} z_{0}, w = \pi_{2}^{ - 2r}
w_{0} $ with $ r \geq 1, z_{0}, w_{0} \in O_{K_{\pi _{2}}}^{*}. $
Substituting them into $ f(z, w ), $ we get $ v_{\pi _{2}}(f(z,w))
= 2, $ a contradiction. Therefore we have proved that if $
C_{i}^{'}(K_{\pi_{2}}) \neq \emptyset, $ then $ p \equiv 7 (\bmod \ 8). $
Conversely, if $ p \equiv  7 (\bmod \ 8 ), $ by $ v_{\pi
_{2}}(f(1 + \pi _{2}, \pi _{2} + \pi _{2}^{2} )) > 2 v_{\pi
_{2}}(f^{\prime }_{w}(1 + \pi _{2}, \pi _{2} + \pi _{2}^{2} )) $ and Hensel lemma, $ C_{i}^{'}(K_{\pi _{2}}) \neq \emptyset. $ \\
\indent ii) \ By lemma 14 in \cite{Merriman}, $ C_{i}^{'}(K_{\mu } ) \neq \emptyset,
 C_{i}^{'}(K_{\mu }) \neq \emptyset,  C_{i}^{'}(K_{\bar{\mu }}) \neq \emptyset,
 C_{i}^{'}(K_{q}) \neq \emptyset . $ Therefore we have proved $ i \in S^{(\varphi)}(E^{\prime } / K ) \
\Longleftrightarrow \ p \equiv 7 (\bmod \ 8). $
\item[(4)(a)]  Let $ f(z, w ) = \mu ^{2} - (p + q) \mu z^{2} + p q
z^{4} - \mu w^{2}. $
Since $ q \equiv 1 (\bmod \ 4), $ then $ a^{2} + b^{2} = q, a,
b \in \mathbb{Z}, \mu = a + b i = a + b - b \pi _{2}. $ \\
\indent i) To prove that $ C_{\mu}^{'}(K_{\pi _{2}}) \neq \emptyset$ if and only if $ a\equiv 2 (\bmod 4)$ or $p\equiv 7 (\bmod 8).$ For necessity, it's equivalent to prove that if $b\equiv 2 (\bmod 4),$ then $ C_{\mu}^{'}(K_{\pi _{2}}) = \emptyset.$ Let $ (z, w
) \in \mathbb{O}_{K_{\pi _{2}}}^{2}, $ by explicit calculation, we
get
\begin{equation}\label{equation:D1}v_{\pi _{2}}(f(z, w)) \leq \left\{
      \begin{array}{rl}
0, \hspace{2mm} & \text{ if } \ z, w \in \mathbb{O}_{K_{\pi
_{2}}}^{\ast } \ \text{or} \  z, w \in \pi _{2}\mathbb{O}_{K_{\pi _{2}}}, \\
3, \hspace{2mm} & \text{ if } \ z \in \pi _{2} \mathbb{O}_{K_{\pi
_{2}}}, \  w \in \mathbb{O}_{K_{\pi _{2}}}^{\ast }, \\
5, \hspace{2mm} & \text{ if } \ z \in \mathbb{O}_{K_{\pi
_{2}}}^{\ast }, \  w \in \pi _{2}\mathbb{O}_{K_{\pi _{2}}}
\end{array}
    \right.
\end{equation}
which implies that $f(z,w)=0 $ has no solutions in $\mathbb{O}_{K_{\pi _{2}}}^{2}.$ Putting $ g(z_{1},
w_{1}) = \mu z_{1}^{4} - (p + q) \mu z_{1}^{2} + p q - \mu
w_{1}^{2}, $ then $g(z_{1},w_{1})=z_{1}^{4}f(\frac{1}{z_{1}},\frac{w_{1}}{z_{1}^{2}}).$ By the above discussion, we know that $ f(z, w ) = 0 $ has
solutions in $ K_{\pi _{2}}^{2} $ if and only if  $ g(z_{1}, w_{1})=0
$ has solutions in $ \mathbb{O}_{K_{\pi _{2}}}^{2}. $ Similarly, one
can get
\begin{equation}\label{equation:D2} v_{\pi _{2}}(g(z_{1}, w_{1})) \leq \left\{
      \begin{array}{rl}
0,\hspace{2mm} & \text{ if } \  z_{1}, w_{1} \in \mathbb{O}_{K_{\pi
_{2}}}^{*}  \ \text{or} \  z_{1}, w_{1} \in
\mathbb{O}_{K_{\pi _{2}}}, \\
3,\hspace{2mm} & \text{ if } \ z_{1} \in
\pi_{2}\mathbb{O}_{K_{\pi_{2}}}, \
w_{1} \in \mathbb{O}_{K_{\pi _{2}}}^{*}, \\
2,\hspace{2mm} & \text{ if } \ v_{\pi _{2}}(w_{1}) = 1 , \
z \in \mathbb{O}_{K_{\pi _{2}}} ^{*},\\
4,\hspace{2mm} & \text{ if } \ v_{\pi_{2}}(w_{1}) = 2, \
z \in \mathbb{O}_{K_{\pi _{2}}} ^{*},\\
5,\hspace{2mm} & \text{ if } \ v_{\pi _{2}}(w_{1}) > 2, \ z \in
\mathbb{O}_{K_{\pi _{2}}} ^{*}.
\end{array}
    \right.
\end{equation}
which implies that $g(z_{1}, w_{1})=0$ has no solutions in $\mathbb{O}_{K_{\pi _{2}}}^{2}.$ By \ref{equation:D1} and \ref{equation:D2}, we conclude that $ C_{\mu}^{'}(K_{\pi _{2}}) = \emptyset.$
Therefore, if $ C_{\mu}^{'}(K_{\pi_{2}}) \neq \emptyset, $ then $ \ p
\equiv 7 (\bmod \ 8 ) $ or the real part $ \Re \mu \equiv 2 (\bmod 4 ). $
Conversely, we have the following results: \\
If $ (b, a) \equiv (4, 3 ), ( 0, 7)(\bmod \ 8), $ then $ v_{\pi
_{2}}(g(\pi _{2}, 1))
> 2 v_{\pi _{2}}(g^{\prime }_{w_{1}}(\pi _{2}, 1)); $ \\
If $ (b, a) \equiv (4, 7), ( 0, 3) (\bmod \ 8), $ then $ v_{\pi
_{2}}(g(0, 1)) > 2 v_{\pi _{2}}(g^{\prime }_{w_{1}}(0, 1)); $ \\
If $ (b, a) \equiv (4, 5 ), ( 0, 1)  (\bmod \ 8), $ then $
v_{\pi _{2}}(g( \pi _{2}, 1 + \pi _{2})) > 2 v_{\pi _{2}}
(g^{\prime }_{w_{1}}(\pi _{2}, 1 + \pi _{2})); $ \\
If $ (b, a) \equiv (4, 1 ), ( 0, 5)(\bmod \ 8), $ then $ v_{\pi
_{2}}(g( 0, 1 + \pi_{2})) > 2 v_{\pi _{2}}(g^{\prime }_{w_{1}}
(0, 1 + \pi_{2})). $ \\
If $ (b, a) \equiv (3, 0 ), ( 1, 2)(\bmod \ 4), $ then $ v_{\pi
_{2}}(g(1, \pi_{2})) > 2 v_{\pi _{2}}(g^{\prime }_{w_{1}}( 1, \pi_{2})). $ \\
If $ (b, a) \equiv (3, 2 ), ( 1, 0)(\bmod \ 4), $ then $ v_{\pi
_{2}}(g( 1, \pi_{2} + \pi_{2}^{2})) > 2 v_{\pi _{2}}(g^{\prime
}_{w_{1}}( 1, \pi_{2} + \pi_{2}^{2})), $ by the above results and Hensel lemma, $ C_{\mu}^{'}(K_{\pi_{2}}) \neq \emptyset .  $ \\
\indent ii) By lemma 14 in \cite{Merriman}, we can easily obtain $ C_{\mu }^{'}(K_{\mu })
\neq \emptyset, C_{\mu }^{'}(K_{\bar{\mu }}) \neq \emptyset, C_{\mu
}^{'}(K_{p}) \neq \emptyset. $ This proves (4)(a). \end{itemize}\end{proof}

\subsection{ Case E \ $ K = \mathbb{Q} (\sqrt{-3}) $ }

We denote the condition: "$ K = \mathbb{Q} (\sqrt{-3}) $ and $ p\equiv 2 (\bmod 3) $ " by condition (E). In this section, we always assume that condition (E) holds.\\
\indent By ramification theory, condition (E) holds if and only if
 $p$ is inertia in $K,$ while $q$ splits completely in $K.$ Denote $ q
= \mu \cdot \bar{\mu }, $ where $ \mu , \bar{\mu } \in \mathbb{Z} [\frac{1 +
\sqrt{-3}}{2} ] $ are two conjugate irreducible elements. Note that
$ 2 $ is inertia in $ K, $ the residual field $ \mathbb{O}_{K} / ( 2
\mathbb{O}_{K} ) \cong \mathbb{F}_{4}, $ the field of four elements.
Then we can take its representatives $ T = \{ 0, 1, \tau = \frac{ -1
+ \sqrt{-3} }{ 2 }, \tau ^{2} \}. $  Under the above assumption and notation, here $ S = \{ \infty , 2, \mu ,
\bar{\mu }, p \}, $ and \ $ K( S, 2) = <-1,  2, \mu , \bar{\mu }, p
>. $ The completions $ K_{v} $ at $ v \in S $ are given
respectively by
$$ K_{\infty } = \mathbb{C}, \ K_{2 } \cong \mathbb{Q}_{2} (\sqrt{-3}), K_{\mu }
\cong K_{\bar{\mu }} \cong \mathbb{Q}_{q}, \ K_{p} = \mathbb{Q}_{p} (\sqrt{-3}). $$
Note that we can assume that $ \mu = s + t \tau, $ where $ s, t \in
\mathbb{Z}, $ then $ \bar{\mu } = s - t - t \tau . $\\
\indent For  $ E
= E_{\varepsilon },E^{\prime }=E^{\prime }_{\varepsilon } $ with $ \varepsilon = \pm 1 $  be as in \ref{equation:basic1} and \ref{equation:basic2}, we have the following results:
\begin{prop}\begin{itemize}
\item[(1)] For $ d \in K(S, 2), $ if one of the following conditions holds: \\
(a) \ $ \mu \mid d; $ \quad  (b) \ $ \bar{\mu } \mid d; $ \quad  (c)
\ $ p \mid d; $ \quad (d) \ $ d = -1. $ \ Then $ d \notin
S^{(\varphi )} (E / K ). $
\item[(2)](a) $ 2 \in S^{(\varphi )}(E / K ) \Longleftrightarrow \
p \equiv 23 (\bmod \ 24 ); $ \\
(b) $ -2 \in S^{(\varphi)}(E / K ) \Longleftrightarrow \ p \equiv 17
(\bmod \ 24 ). $\end{itemize}\end{prop}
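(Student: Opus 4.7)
The plan is to follow exactly the template established in the preceding propositions for Cases A--D: identify $S^{(\varphi )}(E/K)$ with the set of $d \in K(S,2)$ such that $C_{d}(K_{v}) \neq \emptyset$ for every $v \in S = \{\infty, 2, \mu, \bar{\mu}, p\}$, and then test each generator by combining valuation obstructions (when solubility fails) with Hensel-lemma liftings (when it holds). The homogeneous space is $C_{d} : d w^{2} = d^{2} - 2\varepsilon(p+q) d z^{2} + 4 z^{4}$, and local solvability at $\infty$ is automatic since $K_{\infty } = \mathbb{C}$.

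For part (1), items (a)--(c) follow by reducing the equation of $C_{d}$ modulo the corresponding prime: if $p \mid d$ (respectively $\mu \mid d$ or $\bar{\mu }\mid d$), an $\text{ord}_{v}$-parity argument produces an immediate contradiction, exactly as in the analogous clauses of Propositions 2.1--2.6. For (1)(d), I aim to show $C_{-1}(K_{2}) = \emptyset$. Because $2$ is inertia with residual field $\mathbb{F}_{4}$ and representatives $T = \{0, 1, \tau , \tau ^{2}\}$, I would enumerate $w$ and $z$ modulo $2\mathbb{O}_{K_{2}}$ and compute $v_{2}\bigl(-w^{2} - 1 + 2\varepsilon (p+q) z^{2} - 4 z^{4}\bigr)$ class by class, mirroring the exhaustion used in the $-1 \notin S^{(\varphi )}$ argument of Case B. For $(z,w)$ with $v_{2}(z) < 0$, the substitution $(z,w) \mapsto (1/z, w/z^{2})$ reduces to the integral situation.

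For part (2), set $f(z,w) = \pm 2 w^{2} - 4 + 2\varepsilon (p+q) z^{2} \mp 2 z^{4}$, the equation of $C_{\pm 2}$ written for valuation work. At each odd place $\mu , \bar{\mu }, p$, the existence of a local point reduces via Hensel's lemma to a quadratic residue condition modulo $q$ or $p$; under condition (E), using $q = p + 2$ together with $p$ being inertia in $K = \mathbb{Q}(\sqrt{-3})$ (so $\left(\frac{-3}{p}\right) = -1$), these conditions are automatically satisfied. The decisive constraint therefore comes from $v = 2$: the claim is that $C_{2}(K_{2}) \neq \emptyset \iff p \equiv 7 \pmod 8$ and $C_{-2}(K_{2}) \neq \emptyset \iff p \equiv 1 \pmod 8$. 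Necessity is obtained by assuming an integral solution, expanding modulo a sufficiently high power of $2$, and reading off the $v_{2}$-constraint on $p \pm 1$; sufficiency is obtained by exhibiting an approximate root in $\mathbb{O}_{K_{2}}^{2}$, choosing the components from $T$ so that $v_{2}(f) > 2 v_{2}(f'_{w})$, and applying Hensel's lemma.

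Combining the automatic odd-prime conditions with the 2-adic dichotomy yields $p \equiv 23 \pmod{24}$ in (2)(a) and $p \equiv 17 \pmod{24}$ in (2)(b). The main obstacle I anticipate is the 2-adic exhaustion in (1)(d) and the necessity direction in (2), since the residual field $\mathbb{F}_{4}$ produces sixteen residue classes for the pair $(z,w)$ modulo $2\mathbb{O}_{K_{2}}$ and the quartic $f$ must be analysed on each of them; my strategy is to mimic the detailed case split from the analogous $K_{\pi _{2}}$ computations in Cases B and D, choosing the Hensel base points carefully so that the inequality $v_{2}(f) > 2 v_{2}(f'_{w})$ can be verified directly on the given residue.
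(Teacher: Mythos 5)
Your general framework (identify $S^{(\varphi )}(E/K)$ with everywhere-local solvability of $C_{d}$, kill the classes in (1)(a)--(c) by $\mathrm{ord}_{v}$-parity, and handle $d=-1$ by exhausting residues of $(z,w)$ modulo $2\mathbb{O}_{K_{2}}$) is the right one and matches the method used throughout the paper; part (1) of your plan would go through essentially as in Case B, since $-3\equiv 5\ (\bmod\ 8)$ makes $K_{2}=\mathbb{Q}_{2}(\sqrt{-3})$ the same unramified quadratic extension of $\mathbb{Q}_{2}$ with residue field $\mathbb{F}_{4}$.

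Part (2) of your plan, however, contains a genuine error in where the congruence condition comes from. You assert that the conditions at the odd places $\mu ,\bar{\mu },p$ are ``automatically satisfied'' under condition (E) and that the whole dichotomy is decided at $v=2$ via ``$C_{2}(K_{2})\neq \emptyset \iff p\equiv 7\ (\bmod\ 8)$''. Both claims are false. At the place $2$ the computation is identical to Case B (same local field, same equation), and there the criterion is only $p\equiv 3\ (\bmod\ 4)$ for $d=2$ (resp.\ $p\equiv 1\ (\bmod\ 4)$ for $d=-2$): concretely, for $p\equiv 3\ (\bmod\ 8)$ one checks $v_{2}(g(0,2+\sqrt{-3}))=3>2=2v_{2}(g^{\prime }_{w}(0,2+\sqrt{-3}))$ and Hensel's lemma gives a point of $C_{2}(K_{2})$, so your claimed $2$-adic equivalence fails already for $p\equiv 11\ (\bmod\ 24)$. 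The missing half of the mod-$8$ condition lives at the primes above $q$: under condition (E) the prime $q\equiv 1\ (\bmod\ 3)$ \emph{splits}, so $K_{\mu }\cong K_{\bar{\mu }}\cong \mathbb{Q}_{q}$ and no quadratic extension is available to absorb non-residues (unlike the inert prime $p$, where $K_{p}/\mathbb{Q}_{p}$ unramified makes every element of $\mathbb{Z}_{p}^{\ast }$ a square). Reducing $C_{2}$ modulo $q$ and using $p+q\equiv -2$, $pq\equiv 0\ (\bmod\ q)$ gives $w^{2}\equiv 2(z^{2}+\varepsilon )^{2}\ (\bmod\ q)$, which forces $\left( \frac{2}{q}\right) =1$, i.e.\ $p\equiv 5,7\ (\bmod\ 8)$ (and one must separately rule out the singular residue $z^{2}\equiv -\varepsilon$ and the $v_{q}(z)<0$ branch, e.g.\ via $z^{4}-\varepsilon (p+q)z^{2}+1=(z^{2}+\varepsilon)^{2}-2qz^{2}$). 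Only the conjunction of $p\equiv 3\ (\bmod\ 4)$ from $K_{2}$ and $\left(\frac{2}{q}\right)=1$ from $K_{\mu }$ yields $p\equiv 7\ (\bmod\ 8)$, hence $p\equiv 23\ (\bmod\ 24)$; the analogous conjunction with $\left( \frac{-2}{q}\right) =1$ yields (2)(b). As written, your necessity argument either proves too little (only $p\equiv 3\ (\bmod\ 4)$) or rests on a false local statement, so this step must be repaired before the proof is complete.
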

\begin{prop}\begin{itemize}
\item[(1)] For $ d \in K(S, 2), $ if $ 2 \mid d, $ Then $ d \notin
S^{(\widehat{\varphi } )} (E^{\prime } / K ). $
\item[(2)] $ -1, \ p, \ q \in S^{(\widehat{\varphi} )} (E^{\prime } / K
). $
\item[(3)](a) \ $ \mu \in S^{(\widehat{\varphi} )} (E^{\prime } / K ) \
\Longleftrightarrow \ p \equiv 17, 23 (\bmod \ 24 ); $ \\
(b) \ $ \bar{\mu } \in S^{(\widehat{\varphi} )} (E^{\prime } / K ) \
\Longleftrightarrow \ p \equiv 17, 23 (\bmod \ 24 ). $\end{itemize}\end{prop}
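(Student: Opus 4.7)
The plan is to follow the three-part pattern used for the earlier Selmer-group computations in Cases A--D: first a valuation argument eliminating $2\mid d$, then exhibition of easy Selmer elements, and finally a detailed local analysis at $v=2$ and at the split prime $q=\mu\bar\mu$ to pin down the congruence criterion on $p$.

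For part (1), I would suppose $v_2(d)\geq 1$ and assume for contradiction that $(z,w)\in C_d'(K_2)$. Because $2$ is inert in $K$, the valuation $v_2$ is integer-valued on $K_2^\times$ and $pq\in\mathbb{O}_{K_2}^\times$. A case split on the sign of $v_2(z)$ shows that the valuation of the right-hand side of $dw^2=d^2+\varepsilon(p+q)dz^2+pqz^4$ has the wrong parity to match $v_2(dw^2)=v_2(d)+2v_2(w)$, which yields a contradiction and closes (1).

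For part (2), the point $(1,0)$ lies on $C_{-p}'$ when $\varepsilon=1$ (and on $C_{p}'$ when $\varepsilon=-1$), and similarly for $\pm q$, so these classes are automatically in the Selmer group exactly as in Propositions \ref{prop:mainA12} and \ref{prop:mainA22}. For $d=-1$ solubility is automatic over $\mathbb{C}$; at $v=p$ and at the split primes $v=\mu,\bar\mu$, Hensel's lemma applies after solving the equation modulo $p$ (resp.\ modulo $q$); at $v=2$, one Hensel-lifts a seed in $\mathbb{O}_{K_2}$ built from the residues $T=\{0,1,\tau,\tau^2\}$.

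Part (3)(a) is the heart of the proposition. I would test local solubility of $C_\mu'$ at each $v\in S$. At $\infty$ it is automatic. At $v=p$ and at $v=\bar\mu$ (where $\mu$ is a unit) the defining equation can be solved modulo the uniformizer and Hensel-lifted. At $v=\mu$, where $\mu$ is a uniformizer of $K_\mu\cong\mathbb{Q}_q$, the fractional substitution $(z,w)\mapsto (1/z,w/z^2)$ turns solubility into a Hensel-lift condition governed by $q\pmod 8$. The delicate step is at $v=2$: since $K_2=\mathbb{Q}_2(\sqrt{-3})$ is unramified with residue field $\mathbb{F}_4=\{0,1,\tau,\tau^2\}$, I would construct a valuation table for $v_2(f(z,w))$ with $z,w$ ranging over lifts of residues modulo $2,4,8$, in the same style as the tables in the Case D$_1$ and D$_2$ analyses. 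The table forces the 2-adic constraint on $p\pmod 8$, and combining with condition (E), which fixes $p\equiv 2\pmod 3$, gives $p\equiv 17,23\pmod{24}$. Part (3)(b) is symmetric, obtained by swapping $\mu$ and $\bar\mu$.

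The main obstacle is the 2-adic bookkeeping in part (3)(a): with residue field $\mathbb{F}_4$ each mod-$2$ class admits four lifts modulo $4$, so the valuation table is appreciably larger than the corresponding $\mathbb{Q}_2$-computation of \cite{Qiu1}. Once the table is complete, however, the converse direction is routine: for each allowed congruence class of $p$ one writes down an explicit seed and invokes Hensel's lemma to lift it to a genuine $K_2$-point.
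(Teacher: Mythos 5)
Your overall plan coincides with the paper's: part (1) by a parity-of-valuation argument at the inert prime $2$, part (2) from the rational point $(1,0)$ on $C_{\mp p}'$, $C_{\mp q}'$ together with a separate check for $d=-1$, and part (3) by local analysis with the decisive computation at $v=2$ carried out via a residue table over $\mathbb{F}_4=\{0,1,\tau,\tau^2\}$ and Hensel lifting, exactly as in the paper's treatment of (3)(a) (which factors the right-hand side as $(\mu+pz^2)(\mu+qz^2)$ to organize the case split on $v_2(w),v_2(z)$). The one substantive divergence is at the odd places: the paper disposes of $K_\mu$, $K_{\bar\mu}$ and $K_p$ in a single stroke by citing Lemma~14 of \cite{Merriman}, which gives local solubility there unconditionally, whereas you propose explicit Hensel computations at each of these places and, at $v=\mu$, predict ``a Hensel-lift condition governed by $q\pmod 8$.'' That prediction is the one point you must retract or verify away: if a genuine condition survived at $v=\mu$ it would enter the final criterion and change the stated answer, since the congruences $p\equiv 17,23\pmod{24}$ are produced entirely by the place above $2$ combined with condition (E). In fact no condition arises at the odd places (your direct computation, if completed, must come out vacuous, consistently with the cited lemma), so your route is viable but strictly more laborious, and as written it leaves the burden of showing that the $v=\mu$ analysis imposes nothing.
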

\begin{proof} We only prove (3)(a). \\
 (3) (a) \ Let $ f(z, w ) =
 - \mu w^{2} + \mu ^{2} + (p + q ) \mu z^{2} + p q z^{4}. $ \\
\indent i) To prove that $ C_{\mu }^{'}(K_{2}) \neq \emptyset $ if and only if $p\equiv 17,23 (\bmod 24).$ For necessity, if $ C_{\mu }^{'}(K_{2}) \neq \emptyset, $ taking any $(z,w)\in C_{\mu }^{'}(K_{2}). $
 If $ v_{2}(w)
= 0, \ v_{2}(z) \geq 1, $ we can take $ w = a_{0} + a_{1} \cdot 2
(\bmod \ 4 ) $ with $ a_{0} \in T \setminus \{ 0 \}$ and $a_{1} \in
T, $ by $ a_{0}^{2} + a_{1}(a_{0} + a_{1}) \equiv s + t \tau
(\bmod \ 8) $ and the choices of $ a_{0}, a_{1}, $ we obtain $
p \equiv 7 (\bmod 8); $  If $ v_{2}(w) < 0, \ v_{2}(z) < 0, $
by  $$ \mu w_{1}^{2}
= \mu ^{2} z_{1}^{4} + (p + q) \mu z_{1}^{2} + p q $$ with $ z_{1} =
\frac{1}{z}, \ w_{1} = \frac{w}{z^{2}} $ and $ v_{2}(w_{1}) = 0,
v_{2}(z_{1}) > 0, $ similarly as above, one can get $ p \equiv 7
(\bmod \ 8); $ If $ v_{2}(w) = 0, \ v_{2}(z) = 0, $ we can take $ w =
a_{0} + a_{1} \cdot 2 (\bmod \ 4 ), z = b_{0} (\bmod \ 2)
$ with $ a_{0}, b_{0} \in T \setminus \{ 0 \}$ and $ a_{1} \in  T, $ by
$ a_{0}^{2} + 4 a_{1}(a_{0} + a_{1}) \equiv \mu + (p + q) b_{0}^{2}
+ p \bar{\mu } b_{0}^{4}(\bmod \ 8) $ and the choices of $
a_{0}, a_{1}, b_{0}, $ we obtain $ p \equiv  1, 7 (\bmod \ 8);
$ If $ v_{2}(w) \geq 2, \ v_{2}(z) = 0, $ by $ 0 \equiv (p z^{2} +
\mu_{1}) (q z^{2} + \mu ) (\bmod \ 16 ), $ we get $ v_{2}(p z^{2}
+ \mu_{1}) \geq 3 $ or $ v_{2} ( q z^{2} + \mu ) \geq 3, $ hence $ p
\equiv 7 (\bmod \ 8); $ If $ v_{2}(w) = 1, v_{2}(z) = 0, $ let
$ w = 2 w_{0} $ with $ w_{0} \in \mathbb{O}_{K_{2}} ^{*}, $ then $ 4
\mu w_{0}^{2} \equiv (p z^{2} + \mu )(q z^{2} + \mu ) (\bmod \
32), $ it's easy to check that $p\equiv 7 (\bmod 8).$ To sum
up, if $ C^{'}(K_{2}) \neq \emptyset, $ then
$ p \equiv  17, 23 (\bmod \ 24). $
Conversely, by the above proof and Hensel lemma, it is easy to
verify that $ C_{\mu }^{'}(K_{2})\neq \emptyset, $ if
 $ p \equiv  17,23(\bmod \ 24). $  \\
\indent ii) \ By lemma 14 of \cite{Merriman}, it can be directly verified that $ C_{\mu
}^{'}(K_{\mu })\neq \emptyset, \ C_{\mu }^{'}(K_{\bar{\mu }})\neq
\emptyset $ and $ C_{\mu }^{'}(K_{p})\neq \emptyset. $ \end{proof}

\section{The computation of the Shafarevich-Tate groups}
 since $ E(K)[2] = \{O, (0, 0),
(-\varepsilon p, 0), (-\varepsilon q, 0) \}, \ \varphi(E(K)[2]) =
\{O, (0, 0) \} = E^{'}(K)[\widehat{ \varphi }], $ and $ E(K)[2]
\cong \mathbb{Z} /2 \mathbb{Z} \times \mathbb{Z} /2 \mathbb{Z} $ ( See \cite{Qiu2}), by the exact sequences
(\cite{Silverman} p.298, 314, 301 )
$$  \begin{array}{l} 0
\longrightarrow \frac{E^{'}(K )}{\varphi (E(K ))} \longrightarrow
S^{(\varphi )}(E /K ) \longrightarrow \text{TS}(E / K)[\varphi ]
\longrightarrow 0, \\
0 \longrightarrow \frac{E(K)}{\widehat{\varphi }(E^{\prime }(K ))}
\longrightarrow S^{( \widehat{\varphi })}(E^{\prime }/ K )
\longrightarrow \text{TS}(E^{\prime } / K)[\widehat{\varphi } ]
\longrightarrow 0, \\
0 \longrightarrow \frac{E^{'}(K)[\widehat{ \varphi }]}{\varphi
(E(K)[2])} \longrightarrow \frac{E^{'}(K)}{\varphi (E(K) ) }
\longrightarrow \frac{E(K)}{2E(K)} \longrightarrow
\frac{E(K)}{\widehat{ \varphi }(E^{'}(K))} \longrightarrow  0, \quad
\end{array} $$
we get
$$
\text{rank}(E(K)) + \dim_{2}(\text{TS}(E / K
)[\varphi ] ) + \dim_{2}(\text{TS}(E^{'} / K )[\widehat{\varphi }] )$$
\begin{equation}\label{eqation:basic} =\dim_{2}(S^{(\varphi )}(E / K ) ) + \dim_{2}(S^{(\widehat{\varphi
} )}(E^{'} / K ) ) - 2.\end{equation}
(A ) \ We assume that $ \varepsilon = 1. $ If $ p \equiv 3, 17
(\bmod \ 56  ), $ by \ref{prop:mod1}, we have $ S^{(\varphi
)}(E_{ + } / K ) = \{ 0 \}, $ which implies $ \varphi (E_{ + }(K )) =  E_{ +
}^{'}(K)$ and $ \text{TS}(E_{ + } / K )[\varphi ] = 0. $ Hence by
(13), we obtain \ $$ \text{rank}(E_{ + }(K)) +
\dim_{2}(\text{TS}(E_{ + }^{'} / K )[\widehat{\varphi }] ) = 1. $$
Furthermore, by the exact sequences
$$
 0 \longrightarrow \text{TS}(E_{
+ } / K )[\varphi ] \longrightarrow \text{TS}(E_{ + } / K )[2]
\longrightarrow \text{TS}(E_{ + }^{'} / K )[\widehat{\varphi }], $$
$$ 0 \longrightarrow \text{TS}(E_{ + }^{'} / K )[\widehat{\varphi }]
\longrightarrow \text{TS}(E_{ + }^{'} / K )[2] \longrightarrow
\text{TS}(E_{ + } / K )[\varphi ],
$$
we get
$ \text{TS}(E_{ + }^{'} / K )[\widehat{\varphi }]  \cong
\text{TS}(E_{ + }^{'} / K )[2], $ so $$ \text{rank}(E_{ + }(K)) +
\dim_{2}(\text{TS}(E_{ + }^{'} / K )[2] ) = 1. $$
If $ p  \equiv 45 (\bmod \ 56 ), $ then by Proposition 2.1A$_{+},$ we
have \ $ S^{(\varphi )}(E_{ + } / K ) \cong \mathbb{Z} / 2 \mathbb{Z} $ \ and \ $
S^{(\widehat{\varphi } )}(E_{ + }^{'} / K ) \cong (\mathbb{Z} / 2 \mathbb{Z} )^{2}.
$ By \ref{eqation:basic}, we obtain $$ \text{rank}(E_{ + }(K)) +
\dim_{2}(TS(E_{ + } / K )[\varphi ] ) +
\dim_{2}(\text{TS}(E_{ + }^{\prime } / K )[\widehat{\varphi }] ) = 1. $$
If $ p \equiv 31 (\bmod \ 56 ), $ by \ref{prop:mod1}, we
have \ $$  S^{(\varphi )}(E_{ + } / K ) \cong ( \mathbb{Z} / 2 \mathbb{Z} )^{2},
 S^{(\widehat{\varphi } )}(E_{ + }^{\prime } / K ) \cong (\mathbb{Z}
/ 2 \mathbb{Z} )^{3}. $$ by \ref{eqation:basic}, we obtain $$
\text{rank}(E_{ + }(K)) + \dim_{2}(TS(E_{ + } / K )[\varphi ] ) +
\dim_{2}(\text{TS}(E_{ + }^{'} / K )[\widehat{\varphi }] ) = 3. $$
This proves (A). The parts (B) $\sim $(E) can be similarly proved by
the corresponding results in Proposition 2B $\sim $ 2E. The proof of
\ref{theorem:main2} is completed.

{\bf Acknowledgements}\quad I would like thank professor Derong Qiu, who gave me this subject and much valuable advice.

\end{document}